\newtheorem{theorem}{Theorem}[section]
\newtheorem{lemma}[theorem]{Lemma}
\newtheorem{corollary}[theorem]{Corollary}
\theoremstyle{definition}
\newtheorem{assumption}[theorem]{Assumption}
\theoremstyle{remark}
\newtheorem{remark}[theorem]{Remark}
\newtheorem*{remark*}{Remark}
\numberwithin{equation}{section}
\newcommand{\C}{\mathbb{C}}
\newcommand{\R}{\mathbb{R}}
\newcommand{\1}{\mathbbm{1}}
\DeclareMathOperator{\im}{Im}
\DeclareMathOperator{\re}{Re}
\DeclareMathOperator{\spec}{spec}
\DeclareMathOperator*{\esssup}{ess\,sup}
\DeclareMathOperator{\Tr}{Tr}
\DeclareMathOperator{\tr}{Tr}
\newcommand{\abs}[1]{\lvert#1\rvert}
\newcommand{\norm}[1]{\lVert#1\rVert}
\newcommand{\Norm}[1]{\left\lVert#1\right\rVert}
\newcommand{\wt}{\widetilde} 
\newcommand{\eps}{\varepsilon}
\begin{document}

\title[Density of a product of spectral projections]{The spectral density of a product \\ of spectral projections}

\author{Rupert L. Frank}
\address{Rupert L. Frank, Mathematics 253-37, Caltech, Pasadena, CA 91125, USA}
\email{rlfrank@caltech.edu}

\author{Alexander Pushnitski}
\address{Alexander Pushnitski, Department of Mathematics, King's College London, Strand, London, WC2R 2LS, UK}
\email{alexander.pushnitski@kcl.ac.uk}

\begin{abstract}
We consider the product of spectral projections 
$$
\Pi_\eps(\lambda)
=
{\mathbbm{1}}_{(-\infty,\lambda-\eps)}(H_0)
{\mathbbm{1}}_{(\lambda+\eps,\infty)}(H)
{\mathbbm{1}}_{(-\infty,\lambda-\eps)}(H_0)
$$
where $H_0$ and $H$ are the free and the perturbed Schr\"odinger operators with a short
range potential, $\lambda>0$ is fixed and $\eps\to0$. 
We compute the leading term of the asymptotics of $\tr f(\Pi_\eps(\lambda))$ 
as $\eps\to0$ for continuous functions $f$ vanishing sufficiently fast near zero. 
Our construction elucidates calculations that appeared earlier in the theory 
of ``Anderson's orthogonality catastrophe'' and emphasizes the role of Hankel
operators in this phenomenon.
\end{abstract}


\maketitle


\section{Introduction}\label{sec:0}

\subsection{Motivation from mathematical physics}
This paper is partly motivated by a phenomenon called ``Anderson's orthogonality catastrophe'', 
which has been intensively discussed in the physics literature and has recently attracted attention from a mathematical perspective; see \cite{GKM,KOS,GKMO} and the literature cited therein. Let $H_0$ and $H$ be the free and the perturbed Schr\"odinger operators 
$$
H_0=-\Delta\,, \qquad H=-\Delta+V \qquad
\text{ in } L^2(\R^d)\,, \quad d\geq1\,,
$$
where, for the sake of simplicity, the real-valued potential $V$ is assumed to be bounded and compactly supported. 
For $\lambda>0$, consider the product of spectral projections
\begin{equation}
\Pi(\lambda)
=
\1_{(-\infty,\lambda)}(H_0)
\1_{(\lambda,\infty)}(H)
\1_{(-\infty,\lambda)}(H_0)
\quad
\text{ in } L^2(\R^d) \,. 
\label{eq:0d}
\end{equation}
One is interested in 
regularised versions of $\Pi(\lambda)$, obtained by  replacing the step functions 
$\1_{(-\infty,\lambda)}$, $\1_{(\lambda,\infty)}$  by functions with disjoint supports. 
More precisely, we consider two types of regularisations of $\Pi(\lambda)$,
\begin{equation}
\Pi_\eps^{(1)}(\lambda)
=
\1_{(-\infty,\lambda-\eps)}(H_0)
\1_{(\lambda+\eps,\infty)}(H)
\1_{(-\infty,\lambda-\eps)}(H_0)
\label{eq:0f}
\end{equation}
and 
\begin{equation}
\Pi_\eps^{(2)}(\lambda)
=
\psi_\eps^{-}(H_0-\lambda)
\psi_\eps^{+}(H-\lambda)
\psi_\eps^{-}(H_0-\lambda) \,,
\label{eq:0g}
\end{equation}
where $\psi_\eps^{\pm}$ are continuous functions on $\R$ that satisfy $0\leq \psi_\eps^{\pm}\leq 1$ and 
\begin{equation}
\psi_\eps^{+}(x)=
\begin{cases}
0 & \text{if}\ x\leq \eps\,,
\\
1 & \text{if}\ x\geq 2\eps\,,
\end{cases}
\qquad
\psi_\eps^{-}(x)=
\begin{cases}
1 & \text{if}\ x\leq -2\eps\,,
\\
0 & \text{if}\ x\geq -\eps\,.
\end{cases}
\label{eq:0h}
\end{equation}
It is not difficult to see that the operators $\Pi_\eps^{(j)}(\lambda)$, $j=1,2$, are trace class. On the other hand, $\Pi(\lambda)$ is typically not trace class and not even compact.  
We discuss the asymptotics 
of traces
\begin{equation}
\Tr f(\Pi^{(j)}_\eps(\lambda))\,, \qquad \eps\to0\,,\quad j=1,2 \,,
\label{eq:0e}
\end{equation}
where $f=f(t)$ is a continuous function which vanishes sufficiently fast as $t\to0$. 

It turns out that the asymptotics of the traces \eqref{eq:0e} is given 
in terms of the scattering matrix $S(\lambda)$ for the pair $H_0$, $H$ at energy $\lambda$. 
Let $\{ e^{i\theta_\ell(\lambda)}\}_{\ell=1}^L$, 
be the eigenvalues of $S(\lambda)$, enumerated with multiplicities taken into account. 
The scattering matrix is an operator in $L^2(\mathbb S^{d-1})$ for $d\geq2$ and is a 
$2\times 2$ matrix for $d=1$; thus, $L=\infty$ for $d\geq2$ and $L=2$ for $d=1$. 
Denote
\begin{equation}
\label{eq:4}
a_\ell(\lambda) = \frac12 \left| e^{i\theta_\ell(\lambda)}-1\right| 
= 
\left|\sin \frac{\theta_\ell(\lambda)}{2} \right| \in [0,1] \,,
\qquad \ell=1,\dots,L\,.
\end{equation}
For $j=1,2$ and for all continuous functions $f$, vanishing sufficiently fast at zero, we prove that 
\begin{equation}
\lim_{\eps\to0} |\ln \eps|^{-1}\Tr f(\Pi_\eps^{(j)}(\lambda))
=
\frac1{2\pi}\sum_{\ell=1}^L \int_{-\infty}^\infty 
f \biggl( \frac{a_\ell^2(\lambda)}{\cosh^2(\pi x)} \biggr) dx
=
\int_0^1 f(t)\mu_\lambda(t) \,dt\,, 
\label{eq:0i}
\end{equation}
where 
\begin{equation}
\mu_\lambda(t)
=
\frac1{2\pi^2}\sum_{\ell=1}^L
\frac{\1_{(0,a_\ell^2(\lambda))}(t)}{t\sqrt{1-(t/a_\ell^2(\lambda))}}. 
\label{eq:0ia}
\end{equation}
Of course, formula \eqref{eq:0ia} is obtained from the first equality in \eqref{eq:0i} 
by means of a change of variable.

In \cite{GKMO} a regularisation similar to $\Pi^{(2)}_\eps(\lambda)$ is used; the authors essentially prove \eqref{eq:0i} for $f(t)=t^n$. (They state it with $\geq$ instead of $=$, but in fact the proof contains the $\leq$ case as well.)
We believe that our construction is somewhat simpler than that of \cite{GKMO}; we replace
some of the heavy computations of \cite{GKMO} by ``soft'' operator theoretic arguments. 
In particular, our proof highlights the key role of Hankel operators here. 
We will say more about it in Section~\ref{sec:1.4}.

In fact, our proof of the asymptotics \eqref{eq:0i} uses very little specific information about 
Schr\"odinger operators. For this reason, we state it as a general operator theoretic result for a pair of self-adjoint operators $H_0$, $H$, satisfying some standard assumptions of scattering theory.

\subsection{Motivation from operator theory}
In \cite{P1,PYa1}, the spectral structure of the operator $\Pi(\lambda)$ (see \eqref{eq:0d}) was studied 
in detail (for pairs of operators $H_0$, $H$ satisfying some general assumptions of scattering theory).
In particular, it was proven that if $S(\lambda)\not=I$, then $\Pi(\lambda)$ has a non-trivial absolutely continuous 
spectrum, which consists of the union of intervals
\begin{equation}
\sigma_\text{ac}(\Pi(\lambda))
=
\bigcup_{\ell=1}^L \left[0,a_\ell^2(\lambda)\right];
\label{eq:0j}
\end{equation}
each interval contributes multiplicity one to the spectrum. Here $a_\ell(\lambda)$ are given by \eqref{eq:4}.

On the other hand, the regularisations $\Pi_\eps^{(j)}(\lambda)$, $j=1,2$, 
of $\Pi(\lambda)$ are compact operators (see Lemma~\ref{lem1}).
Thus, it is reasonable to ask how the transition from the compact operators  $\Pi_\eps^{(j)}(\lambda)$ 
to the operator $\Pi(\lambda)$ with non-trivial absolutely continuous spectrum occurs and how the eigenvalues of 
$\Pi_\eps^{(j)}(\lambda)$  concentrate to the spectral bands \eqref{eq:0j}. 
Formulas \eqref{eq:0i}, \eqref{eq:0ia} partially  answer this question: 
they give the eigenvalue density of $\Pi_\eps^{(j)}(\lambda)$ as $\eps\to0$ as 
an explicit function $\mu_\lambda(y)$. Note that $\mu_\lambda(y)$ is given as a sum over $\ell$, where 
each summand  
 is supported on a single band $\left[0,a_\ell^2(\lambda)\right]$.

\subsection{Notation}\label{sec:1.3}
We denote by $\mathbf S_p$, $p\geq1$, 
the standard Schatten class and by $\norm{\cdot}_{p}$ the norm in this class. 
$\mathbf B$ denotes the class of all bounded operators, $\mathbf S_\infty$ is the class
of all compact operators and $\norm{\cdot}$ is the operator norm. 
If $X$ and $Y$ are two normal operators (possibly between different Hilbert spaces) 
such that
$$
X|_{(\ker X)^\bot}
\quad\text{is unitarily equivalent to}\quad
Y|_{(\ker Y)^\bot}
$$
we write $X\approx Y$. 
It is well-known that $C^*C \approx C C^*$ for any bounded operator $C$.
We will frequently use the fact that the relation $X\approx Y$ implies $\tr f(X)=\tr f(Y)$ for all continuous functions $f$ with $f(0)=0$. For a set $\Omega\subset\R$, we denote by $\1_\Omega$ the characteristic function of this set.

\section{Main result}\label{sec:1}

\subsection{Assumptions}

Let $H_0$ and $H$ be self-adjoint, lower semi-bounded operators in a Hilbert space $\mathcal H$ such that
$$
H=H_0+V \,,
$$
where the perturbation $V$ admits a factorization of the form
$$
V=G^* V_0 G \,.
$$
Here, $\mathcal K$ is an auxiliary Hilbert space, $V_0$ is a bounded, self-adjoint operator in $\mathcal K$ and 
$G$ is a bounded operator from $\mathcal H$ to $\mathcal K$ satisfying
\begin{equation}
\label{eq:0a}
G(H_0+M)^{-1/2}\in\mathbf S_\infty
\end{equation}
for some constant $M>-\inf\spec H_0$. 
\begin{remark}
In fact, the boundedness of $G$ is not necessary for our construction; 
we state it as a  requirement here only in order to avoid inessential technical explanations. 
\end{remark}

We will need two assumptions: a global one (in spectral parameter) and a local one. 
The global assumption is

\begin{assumption}\label{ass0}
We have
\begin{equation}
\label{eq:0}
G(H_0+M)^{-1/2}\in \mathbf S_{\infty}, \quad
G(H_0+M)^{-1/2-m}\in\mathbf S_{2p}
\end{equation}
for some $p\geq1$ and $M\geq0$, $m\geq0$. 
\end{assumption}
(We have repeated the inclusion \eqref{eq:0a} here for the ease of further reference.) 
Let $\Pi^{(1)}_\eps(\lambda)$, $\Pi^{(2)}_\eps(\lambda)$ be as in \eqref{eq:0f}, \eqref{eq:0g}.
Next, we denote
$$
F_0(\lambda) := G \1_{(-\infty,\lambda)}(H_0) G^* \,,
\qquad
F(\lambda) :=V_0 G \1_{(-\infty,\lambda)}(H) G^* V_0\,.
$$

In order to proceed, we need a simple intermediate result.
\begin{lemma}\label{lem1}
Suppose that Assumption~\ref{ass0} holds true. Then 
for all $\lambda\in \R$ and all $\eps>0$ we have
$$
F_0(\lambda)\in \mathbf S_{p}, 
\quad F(\lambda) \in \mathbf S_{p},
\quad \Pi^{(1)}_\eps(\lambda)\in \mathbf S_{p}, 
\quad \Pi^{(2)}_\eps(\lambda)\in \mathbf S_{p}.
$$
\end{lemma}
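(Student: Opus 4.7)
The plan is to use the elementary identity $\|XX^*\|_p = \|X\|_{2p}^2$ throughout: for each of the four operators I would rewrite it as $XX^*$ for an appropriately chosen $X$ and reduce the claim to $X\in\mathbf{S}_{2p}$. This in turn is handled by factorising through a resolvent power so that Assumption~\ref{ass0} applies directly (for $H_0$) or after a transfer step (for $H$).

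For $F_0(\lambda)$ I set $A:=G\1_{(-\infty,\lambda)}(H_0)$, so that $F_0(\lambda)=AA^*$, and factorise
\[
A = \bigl[G(H_0+M)^{-1/2-m}\bigr]\bigl[(H_0+M)^{1/2+m}\1_{(-\infty,\lambda)}(H_0)\bigr].
\]
The first factor lies in $\mathbf{S}_{2p}$ by Assumption~\ref{ass0}, and the second is a bounded function of $H_0$ once $M$ is large enough that $H_0+M\ge 0$ on $(-\infty,\lambda)$; hence $A\in\mathbf{S}_{2p}$ and $F_0(\lambda)\in\mathbf{S}_p$. For $F(\lambda)$ the same argument applies to $W:=V_0 G\1_{(-\infty,\lambda)}(H)$, provided one first proves the auxiliary inclusion $G(H+M')^{-1/2-m}\in\mathbf{S}_{2p}$ for $M'$ large. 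I would deduce this from Assumption~\ref{ass0} by means of the resolvent identity $(H+M')^{-1}-(H_0+M')^{-1}=-(H_0+M')^{-1}V(H+M')^{-1}$ combined, for non-integer $1/2+m$, with the integral representation
\[
(H+M')^{-\alpha} = \frac{\sin(\pi\alpha)}{\pi}\int_0^\infty t^{-\alpha}(H+M'+t)^{-1}\,dt, \qquad \alpha\in(0,1),
\]
and the boundedness of $V=G^*V_0 G$.

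For $\Pi_\eps^{(1)}(\lambda)$ the disjointness of the spectral supports enters. I would write $\Pi_\eps^{(1)}(\lambda)=XX^*$ with $X:=\1_{(-\infty,\lambda-\eps)}(H_0)\1_{(\lambda+\eps,\infty)}(H)$ and pick a smooth cutoff $\chi\in C_c^\infty(\R)$ with $\chi\equiv 1$ on $(-\infty,\lambda-\eps]$ and $\chi\equiv 0$ on $[\lambda+\eps,\infty)$. The identities $\1_{(-\infty,\lambda-\eps)}(H_0)\chi(H_0)=\1_{(-\infty,\lambda-\eps)}(H_0)$ and $\chi(H)\1_{(\lambda+\eps,\infty)}(H)=0$ convert $X$ into a difference,
\[
X = \1_{(-\infty,\lambda-\eps)}(H_0)\,[\chi(H_0)-\chi(H)]\,\1_{(\lambda+\eps,\infty)}(H),
\]
so it suffices to show $\chi(H)-\chi(H_0)\in\mathbf{S}_p$. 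This Birman--Solomyak-type bound I would obtain via the Helffer--Sj\"ostrand representation
\[
\chi(H)-\chi(H_0) = \frac{1}{\pi}\int_{\C}\bar\partial\wt\chi(z)\,(H-z)^{-1} G^* V_0 G(H_0-z)^{-1}\,dm(z),
\]
with $\wt\chi$ an almost analytic extension of $\chi$, estimating the integrand in $\mathbf{S}_p$ through the inclusions $G(H_0-z)^{-1},\,G(H-z)^{-1}\in\mathbf{S}_{2p}$ (from Assumption~\ref{ass0} and its transfer to $H$) and H\"older's inequality. For $\Pi_\eps^{(2)}(\lambda)$ I use the same template with $X:=\psi_\eps^-(H_0-\lambda)\bigl(\psi_\eps^+(H-\lambda)\bigr)^{1/2}$ and a cutoff $\chi$ adapted to the disjoint supports of $\psi_\eps^{\pm}(\cdot-\lambda)$.

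The main technical obstacle will be the transfer of the Schatten bound in Assumption~\ref{ass0} from $H_0$ to $H$, and the closely related bound $\chi(H)-\chi(H_0)\in\mathbf{S}_p$. For integer $1/2+m$ the iterated resolvent identity gives these cleanly; for general $m\ge 0$ one has to combine the integral formula with a decomposition $1/2+m=n+\beta$, $\beta\in(0,1)$, and check convergence of the resulting operator-valued integrals in the appropriate Schatten norm. Once those two inputs are in place the proof is structural: the identity $\|XX^*\|_p=\|X\|_{2p}^2$ and the disjoint-support trick (which replaces the product of mutually ``far'' spectral projections by the Birman--Solomyak difference) do the rest.
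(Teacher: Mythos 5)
Your treatment of $F_0(\lambda)$ and $F(\lambda)$, and the transfer of the Schatten bound from $H_0$ to $H$, matches the paper's route (the paper proves $G(H+M)^{-1/2-m}\in\mathbf S_{2p}$ in Lemma~\ref{lem4a} via a Neumann series; your resolvent-identity-plus-fractional-power sketch is a plausible alternative). The real problem is in the argument for $\Pi^{(1)}_\eps(\lambda)$.

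You claim that it suffices to show $\chi(H)-\chi(H_0)\in\mathbf S_p$, and that this follows from $G(H_0-z)^{-1},\,G(H-z)^{-1}\in\mathbf S_{2p}$ ``from Assumption~\ref{ass0}''. But Assumption~\ref{ass0} only gives $G(H_0+M)^{-1/2-m}\in\mathbf S_{2p}$, and the factorisation
$$
G(H_0-z)^{-1}=\bigl[G(H_0+M)^{-1/2-m}\bigr]\bigl[(H_0+M)^{1/2+m}(H_0-z)^{-1}\bigr]
$$
is useless once $m>1/2$, since $(H_0+M)^{1/2+m}(H_0-z)^{-1}$ is then \emph{unbounded} (the symbol $(\lambda+M)^{1/2+m}(\lambda-z)^{-1}$ grows like $\lambda^{m-1/2}$ as $\lambda\to\infty$). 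Assumption~\ref{ass0} allows arbitrary $m\geq 0$ — that is the whole point of the parameter $m$, and in the Schr\"odinger application the verification ``$(1+2m)2p>d$'' exploits large $m$ to reach small $p$ — so your claimed inclusion $G(H_0-z)^{-1}\in\mathbf S_{2p}$ and, consequently, the target $\chi(H)-\chi(H_0)\in\mathbf S_p$ are both unjustified in general. In fact the paper never proves (or needs) $\chi(H)-\chi(H_0)\in\mathbf S_p$.

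The fix is exactly what the paper's Lemma~\ref{lem5} does: do not detach the spectral projection from the resolvent difference. Write, as you did,
$$
X=\1_{(-\infty,\lambda-\eps)}(H_0)\bigl[\chi(H_0)-\chi(H)\bigr]\1_{(\lambda+\eps,\infty)}(H),
$$
and inside the Helffer--Sj\"ostrand integral pull the projection through to $G$:
$$
\1_{(-\infty,\lambda-\eps)}(H_0)(H_0-z)^{-1}V(H-z)^{-1}
=(H_0-z)^{-1}\bigl(G\,\1_{(-\infty,\lambda-\eps)}(H_0)\bigr)^*V_0\,G\,(H-z)^{-1}.
$$
Now $G\,\1_{(-\infty,\lambda-\eps)}(H_0)\in\mathbf S_{2p}$ \emph{for every} $m\geq 0$, because the projection localises $H_0$ to a bounded interval and so $(H_0+M)^{1/2+m}\1_{(-\infty,\lambda-\eps)}(H_0)$ is bounded; the factor $G(H-z)^{-1}$ is only bounded, not Schatten class, and one does not need more. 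This gives $X\in\mathbf S_{2p}$ (not $\mathbf S_p$), whence $\Pi^{(1)}_\eps(\lambda)=XX^*\in\mathbf S_p$, which is what Lemma~\ref{lem1} claims. The same adjustment applies to your $\Pi^{(2)}_\eps$ argument. Keeping the projection attached is not a cosmetic detail here: it is the mechanism that compensates for the extra decay $(H_0+M)^{-m}$ demanded by Assumption~\ref{ass0}.
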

The proof will be given in Section~\ref{sec:1a}. 

We fix some reference point $\lambda=\lambda_*\in\R$; 
our main result below concerns the 
spectral asymptotics of the operators $\Pi_\eps^{(j)}(\lambda_*)$, $j=1,2$. 
Thus, our local assumption pertains to a neighbourhood of the point $\lambda_*$:
\begin{assumption}\label{ass1}
There is a $\delta>0$ such that the derivatives
$$
F_0'(\lambda) = \frac{d}{d\lambda} F_0(\lambda) \,,
\qquad
F'(\lambda) = \frac{d}{d\lambda} F(\lambda)
$$
exist in the $\mathbf S_p$ norm for all $\lambda$ in the interval $[\lambda_*-\delta,\lambda_*+\delta]$ and are
H\"older continuous on this interval with some positive exponent $\varkappa>0$. 
\end{assumption} 
By a version of Privalov's theorem, Assumption~\ref{ass1} implies that the operators
\begin{equation}
T_0(z)=G(H_0-z)^{-1}G^*, \quad
T(z)=V_0G(H-z)^{-1}G^*V_0, 
\quad \im z>0,
\label{eq:1a}
\end{equation}
have limits $T_0(\lambda+i0)$, $T(\lambda+i0)$ in $\mathbf S_p$ norm for $\lambda$ in the open interval 
$(\lambda_*-\delta,\lambda_*+\delta)$, and these limits are H\"older continuous in $\lambda$ on 
this interval. In other words, Assumption~\ref{ass1} implies
a local  version of the limiting absorption principle. 
Thus, by standard results of abstract scattering theory (see, e.g., \cite[Chapter 4]{Ya1}), 
the (local) wave operators for $H_0$ and $H$ on the interval $(\lambda_*-\delta,\lambda_*+\delta)$ 
exist and the corresponding scattering matrix 
$S(\lambda)$ is well defined for $\lambda$ in this interval.

\begin{remark*}
In fact, 
we will only use the H\"older continuity of $F_0'(\lambda)$, $F'(\lambda)$ at the point $\lambda=\lambda_*$:
\begin{align*}
\|F_0'(\lambda) - F_0'(\lambda_*)\|_p &= O(|\lambda-\lambda_*|^\varkappa)
\qquad\text{as}\ \lambda\to \lambda_*  \,,
\\
\|F'(\lambda) - F'(\lambda_*)\|_p &= O(|\lambda-\lambda_*|^\varkappa)
\qquad\text{as}\ \lambda\to \lambda_* \,.
\end{align*}
The H\"older continuity as stated in Assumption~\ref{ass1} is needed only to ensure that the 
scattering matrix is well defined. 
\end{remark*}

\subsection{Main result}
As in Section~\ref{sec:0}, we denote by 
$\{ e^{i\theta_\ell(\lambda)}\}_{\ell=1}^L$, $L\leq\infty$,
the eigenvalues of $S(\lambda)$, enumerated with multiplicities taken into account,
and we use the notation $a_\ell(\lambda)$, see \eqref{eq:4}. 
Our main result is

\begin{theorem}\label{thm1}
Let Assumptions~\ref{ass0} and \ref{ass1} hold true. Let $f(t) = t^p g(t)$ with $g$ continuous on $[0,1]$. Then for $j=1,2$, one has
\begin{equation}
\label{eq:5}
\lim_{\eps\to +0} |\ln\eps|^{-1} \tr f(\Pi^{(j)}_\eps(\lambda_*)) 
= 
\frac{1}{2\pi} \sum_{\ell=1}^L 
\int_{-\infty}^\infty f \biggl( \frac{a_\ell^2(\lambda_*)}{\cosh^2(\pi x)} \biggr) \,dx
=
\int_0^1 f(t)\mu_{\lambda_*}(t)\,dt\,,
\end{equation}
where $\mu_{\lambda_*}$ is given by \eqref{eq:0ia} with $\lambda=\lambda_*$.
\end{theorem}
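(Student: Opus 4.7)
The plan is to reduce $\tr f(\Pi^{(j)}_\eps(\lambda_*))$ to the spectral asymptotics of an explicit model Hankel-type operator attached to the scattering matrix $S(\lambda_*)$.

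First, I would show that it suffices to prove \eqref{eq:5} for the smoother regularisation $\Pi^{(2)}_\eps$. The difference $\Pi^{(1)}_\eps-\Pi^{(2)}_\eps$ is supported on spectral windows of width $O(\eps)$ around $\lambda_*$, hence has $\mathbf S_p$-norm $O(\eps^\varkappa)$ by the H\"older continuity in Assumption~\ref{ass1}. Combined with the vanishing $f(t)=t^pg(t)$ at $t=0$, this makes the difference of traces $o(|\ln\eps|)$. Using the identity $C^*C\approx CC^*$, I would then rearrange the three factors of $\Pi^{(2)}_\eps$ and deploy the factorisation $V=G^*V_0G$ to rewrite the trace in terms of operators with explicit $G$-sandwiches, so that the Schatten class memberships given by Lemma~\ref{lem1} are available throughout.

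Next, I would pass to the spectral representation of $H_0$ and use the limiting absorption principle implied by Assumption~\ref{ass1}, together with standard abstract scattering theory (as in \cite[Chapter 4]{Ya1} and the Pushnitski--Yafaev analysis of $\Pi(\lambda)$ in \cite{P1,PYa1}), to express the cross factor $\psi_\eps^-(H_0-\lambda_*)\psi_\eps^+(H-\lambda_*)$ in fiberwise form: its integral kernel in the $H_0$ representation is, modulo lower order terms, of the form $\psi_\eps^-(\xi-\lambda_*)\,K(\xi,\xi';\lambda_*)\,\psi_\eps^+(\xi'-\lambda_*)$, with the off-diagonal kernel $K$ determined at the diagonal $\xi=\xi'=\lambda_*$ by $S(\lambda_*)-I$. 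Freezing the scattering matrix at $\lambda_*$ costs only another $O(\eps^\varkappa)$ H\"older error. After diagonalising $S(\lambda_*)=\bigoplus_\ell e^{i\theta_\ell}$, the frozen model decouples into scalar summands, each a truncated Hankel-type operator with ``jump'' $a_\ell=|\sin(\theta_\ell/2)|$. A logarithmic change of variable $\xi-\lambda_*=-\eps e^t$, $\xi'-\lambda_*=\eps e^{t'}$ turns each scalar piece into an $\eps$-independent operator on a window of length $\sim |\ln\eps|$, diagonalisable by the Mellin transform; the resulting symbol gives the density $a_\ell^2/\cosh^2(\pi x)$, and the window length produces the prefactor $|\ln\eps|$.

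The main obstacle is the error analysis in the middle step: showing that each substitution --- comparing regularisations, replacing $H$-spectral objects by $H_0$-ones via the wave operator, and freezing $S$ at $\lambda_*$ --- costs only $o(|\ln\eps|)$ in $\tr f$. The key inputs are the Schatten H\"older continuity of $F_0'$ and $F'$ from Assumption~\ref{ass1} and the smoothing factor $t^p$ in $f$, which together convert H\"older-small $\mathbf S_p$ errors into $o(|\ln\eps|)$ trace errors via inequalities of the type $|\tr f(A)-\tr f(B)|\leq C(\|A\|_p+\|B\|_p)^{p-1}\|A-B\|_p$. Once the model is in place, the Mellin-transform computation of the $1/\cosh^2(\pi x)$ density is classical.
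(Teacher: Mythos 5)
Your proposal follows a route that is genuinely different from the paper's, and in fact closer to the computation-heavy approach of Gebert--K\"uttler--M\"uller--Otte that the paper explicitly sets out to replace with ``soft'' operator-theoretic arguments. The paper's key moves, which you do not use, are: (i) the spectral localization from $\Pi^{(1)}_\eps(\lambda_*)$ to the $\delta$-window operator $\wt\Pi^{(1)}_\eps$; (ii) the factorization $\1_{(\eps,\delta)}(H)\1_{(-\delta,-\eps)}(H_0)=\mathcal Z_\eps(\mathcal Z^{(0)}_\eps)^*$ via Laplace-transform-type operators (Lemma~\ref{factorization}); (iii) recognizing $(\mathcal Z_\eps)^*\mathcal Z_\eps$ and $(\mathcal Z^{(0)}_\eps)^*\mathcal Z^{(0)}_\eps$ as operator-valued Hankel operators on $L^2(\R_+,\mathcal K)$; and (iv) an $\mathbf S_p$-norm approximation of these by the model operators $\Gamma_\eps\otimes F'(0)$ and $\Gamma_\eps\otimes F_0'(0)$ (Lemma~\ref{lem7}). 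The Mellin-transform diagonalization you invoke does appear in the paper, but only in the analysis of the scalar model $\Gamma_\eps$, not applied directly in the spectral representation of $H_0$.

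Two concrete issues. First, the claim that $\norm{\Pi^{(1)}_\eps(\lambda_*)-\Pi^{(2)}_\eps(\lambda_*)}_p=O(\eps^\varkappa)$ is incorrect, and the mechanism you cite does not produce it. The difference is localized to spectral windows of width $\sim\eps$, but the relevant Hankel-type estimate $\norm{K}_p^p\leq C\int\norm{\sigma(\lambda)}_p^p\,d\lambda/\lambda$ gives a contribution of order $\int_\eps^{2\eps}d\lambda/\lambda=O(1)$, not $O(\eps^\varkappa)$; the H\"older exponent is used elsewhere (to replace $F'(\lambda)$ by $F'(\lambda_*)$ inside the kernel, as in Lemma~\ref{lem7}). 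An $O(1)$ bound is still $o(\abs{\ln\eps})$, so the conclusion could survive, but you should not attribute it to H\"older continuity. Note also that the paper compares in the opposite direction: it proves the result first for $\Pi^{(1)}_\eps$ and then squeezes $\Pi^{(2)}_\eps$ between $\Pi^{(1)}_\eps$ and $\Pi^{(1)}_{2\eps}$ via the min-max principle (see \eqref{eq:33} and \eqref{eq:34}), avoiding any norm estimate of the difference.

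Second, the step you flag as the main obstacle --- passing to the $H_0$-spectral representation, expressing the cross factor fiberwise, freezing $S$ at $\lambda_*$, and controlling all errors --- is precisely the heavy stationary-scattering-theory computation that the paper deliberately bypasses. Carrying it out would require an explicit fiberwise representation of $\1_{(-\infty,\lambda_*-\eps)}(H_0)\1_{(\lambda_*+\eps,\infty)}(H)$, control of off-diagonal wave-operator contributions, and H\"older-in-$\mathbf S_p$ kernel estimates; this is essentially the GKMO machinery. As written your sketch is a program rather than a proof. Finally, the inequality $\abs{\tr f(A)-\tr f(B)}\leq C(\norm{A}_p+\norm{B}_p)^{p-1}\norm{A-B}_p$ holds for $f(t)=t^p$ but must be supplemented by a Weierstrass approximation step (as the paper does) to handle general $f(t)=t^pg(t)$, a point your sketch omits.
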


\noindent\textit{Discussion.}
\begin{enumerate}[(1)]
\item 
As we shall see, Assumption \ref{ass1} ensures that $\sum_{\ell=1}^L a_\ell^p(\lambda_*)<\infty$
(see  Lemma~\ref{lem9}),
and so the series in \eqref{eq:5} converges for the functions $f$ as in the hypothesis of the theorem.
\item
For $f(t)=t^n$, $n\geq p$, $n\in\mathbb N$, we obtain
\begin{align}
\label{eq:5a}
\lim_{\eps\to +0} |\ln\eps|^{-1} \tr \left(\Pi^{(j)}_\eps(\lambda_*)\right)^n 
& = 
\sum_{\ell=1}^L a_\ell^{2n}(\lambda_*)
\frac{1}{2\pi}  \int_{-\infty}^\infty (\cosh^2(\pi x))^{-n} dx
\notag \\
& =\Tr \bigl(\tfrac12\abs{S(\lambda_*)-I}\bigr)^{2n}\
\frac{1}{2\pi}  \int_{-\infty}^\infty (\cosh^2(\pi x))^{-n} dx \,.
\end{align}
Using the change of variables $y=\cosh^2(\pi x)$,
the integral in \eqref{eq:5a} can be explicitly computed,
\begin{align}
\frac{1}{2\pi} \int_{-\infty}^\infty (\cosh^2(\pi x))^{-n} dx 
& =
\frac{1}{2\pi^2}\int_1^\infty y^{-n-\frac12}(y-1)^{-\frac12}dy
\notag \\
& = 
\frac1{2\pi^2} B(n,\tfrac 12)
=
\frac{n}{2\pi^2}\frac{((n-1)!)^2}{(2n)!}2^{2n},
\label{eq:5b}
\end{align}
where $B(\cdot,\cdot)$ is the Beta function.
\item
Of course, our conditions on $f$ are far from optimal. 
For example, by a standard application of monotone convergence, 
the function $f$ in \eqref{eq:5} can be replaced by the characteristic function of any interval 
$(\alpha,\beta)$, where $0<\alpha<\beta\leq1$. 
Then Theorem~\ref{thm1} can be interpreted as the convergence of the 
eigenvalue density of $\Pi_\eps^{(j)}(\lambda_*)$ to the limiting density $\mu_{\lambda_*}$
given by the right side of \eqref{eq:0ia}. 
\item
Since our assumptions are symmetric in $H$ and $H_0$ 
(in fact, \eqref{eq:0} implies the same with $H$ in place of $H_0$, see Lemma \ref{lem4a}), 
one can see that 
the statement, identical to Theorem~\ref{thm1},  
holds true for the spectral density of the operators
$$
\1_{(-\infty,\lambda_*-\eps)}(H)
\1_{(\lambda_*+\eps,\infty)}(H_0)
\1_{(-\infty,\lambda_*-\eps)}(H)
$$
and 
$$
\psi_\eps^{-}(H-\lambda_*)
\psi_\eps^{+}(H_0-\lambda_*)
\psi_\eps^{-}(H-\lambda_*) \,.
$$
Since these operators are equivalent in the sense of the notion $\approx$ introduced in Subsection \ref{sec:1.3} to the operators
$$
\1_{(\lambda_*+\eps,\infty)}(H_0)
\1_{(-\infty,\lambda_*-\eps)}(H)
\1_{(\lambda_*+\eps,\infty)}(H_0)
$$
and
$$
\tilde\psi_\eps^{+}(H_0-\lambda_*)
\tilde\psi_\eps^{-}(H-\lambda_*)
\tilde\psi_\eps^{+}(H_0-\lambda_*) \,,
$$
(with $\tilde\psi_\eps^{+}= (\psi_\eps^{+})^{1/2}$ and $\tilde\psi_\eps^{-}= (\psi_\eps^{-})^{2}$ which again are of the form \eqref{eq:0h}), we can see that 
the statement, identical to Theorem~\ref{thm1},  
holds true also for the spectral density of the latter operators.

\item
If $m=0$ in \eqref{eq:0}, then Theorem~\ref{thm1} holds true with $f(t)=t^{p/2}g(t)$
instead of $f(t)=t^{p}g(t)$. 
Indeed, in this case one can prove Lemma~\ref{lem5} with $p$ instead of $2p$
and therefore Lemma~\ref{lem4} holds with $p/2$ instead of $p$.

\end{enumerate}

\begin{corollary}\label{cor2}
Let Assumptions~\ref{ass0} and \ref{ass1} hold true with $p=1$. Then for $j=1,2$ we have the upper bound
\begin{equation}
\label{eq:6a}
\limsup_{\eps\to 0+} |\ln\eps|^{-1} \ln\det\left(I-\Pi_\eps^{(j)}(\lambda_*)\right) 
\leq 
\frac{1}{2\pi} \sum_{\ell=1}^L \int_{-\infty}^\infty \ln \left( 1- \frac{a_\ell^2(\lambda_*)}{\cosh^2(\pi x)} \right) dx.
\end{equation}
\end{corollary}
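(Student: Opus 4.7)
The plan is to approximate $-\ln(1-t)$ from below by partial sums of its Taylor series, apply Theorem~\ref{thm1} to each such partial sum, and then pass to the limit. By Lemma~\ref{lem1} with $p=1$, the operator $\Pi_\eps^{(j)}(\lambda_*)$ is positive and trace class, so its Fredholm determinant is well defined. If any eigenvalue of $\Pi_\eps^{(j)}(\lambda_*)$ equals $1$, then $\ln\det(I-\Pi_\eps^{(j)}(\lambda_*))=-\infty$ and the claimed bound is trivial; we may therefore assume that every eigenvalue $\mu_k$ of $\Pi_\eps^{(j)}(\lambda_*)$ lies in $[0,1)$ and write $\ln\det(I-\Pi_\eps^{(j)}(\lambda_*))=\sum_k\ln(1-\mu_k)$.

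For each $N\in\N$ set $f_N(t)=\sum_{n=1}^{N} t^n/n$; this is a polynomial with $f_N(0)=0$ and $f_N(t)=t\,g_N(t)$ with $g_N$ continuous on $[0,1]$, so $f_N$ falls under the hypothesis of Theorem~\ref{thm1} with $p=1$. Applied to $f_N$, Theorem~\ref{thm1} yields
$$
\lim_{\eps\to 0+}|\ln\eps|^{-1}\tr f_N(\Pi_\eps^{(j)}(\lambda_*))
=\frac{1}{2\pi}\sum_{\ell=1}^L\int_{-\infty}^\infty f_N\!\left(\frac{a_\ell^2(\lambda_*)}{\cosh^2(\pi x)}\right)dx.
$$
On the other hand, from the elementary scalar inequality $\ln(1-\mu)\le -f_N(\mu)$ for $\mu\in[0,1)$, applied to each $\mu_k$ and summed, we have $\ln\det(I-\Pi_\eps^{(j)}(\lambda_*))\le -\tr f_N(\Pi_\eps^{(j)}(\lambda_*))$. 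Together with the previous display this gives, for every $N$,
$$
\limsup_{\eps\to 0+}|\ln\eps|^{-1}\ln\det(I-\Pi_\eps^{(j)}(\lambda_*))
\le -\frac{1}{2\pi}\sum_{\ell=1}^L\int_{-\infty}^\infty f_N\!\left(\frac{a_\ell^2(\lambda_*)}{\cosh^2(\pi x)}\right)dx.
$$

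The final step is to take the infimum over $N$. Since $f_N(t)\nearrow -\ln(1-t)$ pointwise on $[0,1)$, two successive applications of monotone convergence (first inside each integral, then in the sum over $\ell$) show that, as $N\to\infty$, the right-hand side above decreases to
$$
\frac{1}{2\pi}\sum_{\ell=1}^L\int_{-\infty}^\infty \ln\!\left(1-\frac{a_\ell^2(\lambda_*)}{\cosh^2(\pi x)}\right)dx,
$$
which is precisely the bound \eqref{eq:6a}. The only loose end worth remarking on is the finiteness of this limit, which follows from $\sum_\ell a_\ell^2(\lambda_*)<\infty$ (a consequence of Lemma~\ref{lem9} applied with $p=1$) combined with the integrable logarithmic singularity of each integrand near $x=0$, since $1-\cosh^{-2}(\pi x)\asymp(\pi x)^2$ there. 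No serious obstacle is anticipated: the substance of the argument is already contained in Theorem~\ref{thm1}, while the passage from a polynomial $f$ to $f(t)=\ln(1-t)$ is a purely scalar monotone approximation that requires no uniformity in $\eps$.
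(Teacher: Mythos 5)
Your proof is correct and follows essentially the same route as the paper: bound $\ln\det(I-\Pi_\eps^{(j)})$ from above by $-\tr f(\Pi_\eps^{(j)})$ for admissible $f$ with $-f\geq\ln(1-\cdot)$, apply Theorem~\ref{thm1}, and pass to the limit. The only stylistic difference is that you pick a concrete increasing sequence of approximants (the Taylor partial sums $f_N$) and invoke monotone convergence explicitly, whereas the paper takes the infimum over all continuous $f$ vanishing near zero with $f\geq\ln(1-\cdot)$; the two are interchangeable, and your version has the small merit of making the passage to the limit and the finiteness of the right-hand side (via $\sum_\ell a_\ell<\infty$ from Lemma~\ref{lem9} and the integrable logarithmic singularity) completely explicit.
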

The proof is given in Section~\ref{sec:6}.

\begin{remark*}
\begin{enumerate}[(1)]
\item
The integral on the right side of \eqref{eq:6a} can be computed and one obtains
\begin{align*}
\frac{1}{2\pi} \sum_{\ell=1}^L \int_{-\infty}^\infty \ln \left( 1- \frac{a_\ell(\lambda_*)^2}{\cosh^2(\pi x)} \right) dx
& = 
- \frac{1}{\pi^2} \sum_{\ell=1}^L \arcsin^2 a_\ell (\lambda_*)
\\
& = 
- \frac1{\pi^2} \tr \arcsin^2 \frac{|S(\lambda_*)-I|}{2}  \,.
\end{align*}
One way to see the first equality is to expand $\ln(1-x) = - \sum_{n=1}^\infty x^n/n$ and to integrate 
term by term using \eqref{eq:5b}.
The claimed formula then follows from the expansion \cite[Eq. 1.645 2]{GR}
$$
\sum_{n=1}^\infty 2^{2n-1} \frac{\left( \left(n-1\right)!\right)^2}{(2n)!} a^{2n} = \arcsin^2 a \,,
\qquad |a|\leq 1 \,.
$$
This computation is very similar to a computation in \cite{GKMO}.
\item
Under our assumptions, it is not possible to obtain any lower bound in \eqref{eq:6a}. 
Indeed, a single eigenvalue $=1$ of $\Pi_\eps^{(j)}(\lambda_*)$ can make 
the determinant vanish. Such examples are easy to construct in the abstract setting 
discussed here. 
\end{enumerate}
\end{remark*}

\subsection{Application to the Schr\"odinger operator}

Let $H_0=-\Delta$ in $L^2(\R^d)$, $d\geq1$, and let the real-valued potential $V=V(x)$, $x\in\R^d$, 
satisfy
\begin{equation}
\abs{V(x)}\leq C(1+\abs{x})^{-\rho}, \quad \rho>1.
\label{eq:6b}
\end{equation}
Denote $H=H_0+V$. 

\begin{lemma}
Assume \eqref{eq:6b}.
Then: 
\begin{enumerate}[\rm (i)]
\item
Assumption~\ref{ass0} is satisfied with any $p\geq1$, $p>d/\rho$;
\item
Assumption~\ref{ass1} is satisfied with any $p>(d-1)/(\rho-1)$. 
\end{enumerate}
\end{lemma}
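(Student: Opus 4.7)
The plan is to work with the factorisation $V=G^*V_0 G$ in which $G$ is the operator of multiplication by $|V|^{1/2}$ (so that $|G(x)|\leq C(1+|x|)^{-\rho/2}$) and $V_0=\sgn V$ is bounded and self-adjoint; boundedness of $G$ is immediate from (\ref{eq:6b}).

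For part~(i), the main tool is the Kato-Seiler-Simon bound
$$
\bigl\|f(X)\,g(-i\nabla)\bigr\|_{q} \leq (2\pi)^{-d/q}\|f\|_{L^q(\R^d)}\|g\|_{L^q(\R^d)},\qquad q\geq 2.
$$
Applied with $f=|V|^{1/2}$ and $g(\xi)=(|\xi|^2+M)^{-1/2-m}$, the two $L^q$ norms are finite when $q\rho>2d$ and $q(1+2m)>d$ respectively. Given $p\geq 1$ with $p>d/\rho$, I would set $q=2p\geq 2$: the first inequality holds, and the second is achieved by picking $m$ sufficiently large. This yields $G(H_0+M)^{-1/2-m}\in\mathbf S_{2p}$. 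The compactness $G(H_0+M)^{-1/2}\in\mathbf S_\infty$ is then obtained by the same bound with $m=0$ and any fixed $q>\max(d,2d/\rho)$, using $\mathbf S_q\subset\mathbf S_\infty$.

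Part~(ii) is the delicate one. The strategy is to establish, in $\mathbf S_p$-norm for $p>(d-1)/(\rho-1)$, the existence and H\"older continuity in $\lambda$ near $\lambda_*$ of the boundary values $T_0(\lambda\pm i0)$ and $T(\lambda\pm i0)$ of the sandwiched resolvents defined in (\ref{eq:1a}); Assumption~\ref{ass1} then follows from the Stieltjes inversion formulas
$$
F_0'(\lambda)=\tfrac{1}{2\pi i}\bigl(T_0(\lambda+i0)-T_0(\lambda-i0)\bigr),\qquad F'(\lambda)=\tfrac{1}{2\pi i}\bigl(T(\lambda+i0)-T(\lambda-i0)\bigr),
$$
valid on the positive half-line. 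For $H_0=-\Delta$ the Schwartz kernel of $(H_0-\lambda\mp i0)^{-1}$ is an explicit Hankel-type oscillatory integral whose modulus decays like $|x-y|^{-(d-1)/2}$ at infinity. Sandwiching with $|V|^{1/2}\sim\jap{x}^{-\rho/2}$ and bounding the $\mathbf S_{2p}$-norm of the resulting oscillatory integral operator --- either by a dyadic decomposition into annular pieces with Hilbert-Schmidt estimates on each piece, or via the classical weighted-$L^2$ Schatten limiting-absorption bounds --- gives the threshold $p(\rho-1)>d-1$. Differencing and expanding the oscillating phase in powers of $\sqrt{\lambda}-\sqrt{\lambda_*}$ yields H\"older continuity with some $\varkappa>0$. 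The corresponding bounds for $T(\lambda\pm i0)$ follow from the second resolvent identity
$$
(H-z)^{-1}=(H_0-z)^{-1}-(H_0-z)^{-1}V(H-z)^{-1}
$$
combined with the fact that, for short-range $V$, the operator $I+V_0 T_0(\lambda\pm i0)$ is invertible away from the discrete set of embedded eigenvalues of $H$ (and we may assume $\lambda_*$ is not such an eigenvalue).

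The main obstacle is the sharp Schatten exponent in part~(ii): a direct Kato-Seiler-Simon estimate only gives Hilbert-Schmidt bounds under the stronger condition $\rho>(d+1)/2$ and does not attain $p>(d-1)/(\rho-1)$, so one must exploit the oscillations of the free resolvent kernel. In dimension $d=1$ the threshold degenerates to $p>0$, any $p\geq 1$ works, and the argument collapses to a direct H\"older estimate on the $1\times 1$ kernel $\tfrac{i}{2\sqrt\lambda}e^{i\sqrt\lambda|x-y|}$ weighted by $|V|^{1/2}$; in higher dimensions, the Fourier-analytic machinery of Agmon-Kuroda-Yafaev is where the real work lies.
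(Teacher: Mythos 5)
Part (i) is essentially identical to the paper's argument: factorise $V=|V|^{1/2}\sgn V\,|V|^{1/2}$ and invoke Kato--Seiler--Simon. That is fine.

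For part (ii), your overall structure (oscillatory-kernel bound for the free resolvent, then transfer to $H$ via the resolvent identity, exploiting invertibility of $I+V_0T_0(\lambda\pm i0)$) is in the right spirit, but there is a sharpness issue you should not gloss over. You propose to prove that the boundary values $T_0(\lambda\pm i0)$ themselves lie in $\mathbf S_p$ and are H\"older continuous there for $p>(d-1)/(\rho-1)$. The object that actually attains this $(d-1)$-dimensional threshold is $F_0'(\lambda)=\frac1\pi\im T_0(\lambda+i0)$, which up to a scalar equals $(G\,E_\lambda^*)(G\,E_\lambda^*)^*$ with $E_\lambda$ the Fourier restriction to the sphere $|\xi|^2=\lambda$; its kernel has no diagonal singularity, only decay, so the Schatten exponent is governed by the codimension-one geometry. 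By contrast $\re T_0(\lambda+i0)$ is a weighted principal-value (Riesz-type) operator with the full $|x-y|^{-(d-2)}$ diagonal singularity, and in dimensions $d\geq 3$ this pushes the admissible Schatten exponent above $(d-1)/(\rho-1)$ for moderate $\rho$. Since Assumption~\ref{ass1} concerns only $F_0'$ and $F'$, the correct target of the dyadic/annular estimate is $\im T_0(\lambda+i0)$, not $T_0(\lambda\pm i0)$; as written your plan would get stuck at a worse exponent. The paper avoids reproving any of this by simply citing Yafaev \cite[Lemma 8.1.8]{Ya2}, which is exactly the $\mathbf S_p$-H\"older estimate for $F_0'$.

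The passage from $F_0'$ to $F'$ is also streamlined in the paper: rather than iterating the second resolvent identity and controlling Schatten norms of $T(\lambda\pm i0)$, it uses $T(z)=V_0-(I+V_0T_0(z))^{-1}V_0$ to obtain the conjugation identity
\begin{equation*}
F'(\lambda)=\bigl(V_0-T(\lambda+i0)\bigr)\,F_0'(\lambda)\,\bigl(V_0-T(\lambda+i0)\bigr)^* ,
\end{equation*}
so that membership in $\mathbf S_p$ and H\"older continuity of $F'$ are inherited from $F_0'$ by conjugation with a bounded, operator-norm--H\"older family. Note also that for short-range $V$ (any $\rho>1$) the operator-norm limiting absorption principle holds for \emph{all} $\lambda>0$ (absence of positive eigenvalues by Kato's theorem), so your caveat that $\lambda_*$ avoid embedded eigenvalues is unnecessary.
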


Thus, the bound \eqref{eq:6b} ensures that Theorem~\ref{thm1} applies with any 
$$
p\geq 1\,,\qquad p>\max\{d/\rho,(d-1)/(\rho-1)\}. 
$$
In particular, if $\rho>d$, then Corollary~\ref{cor2} applies.

\begin{proof}
(i) For $(1+2m)2p>d$ we shall verify the inclusion
$$
\abs{V}^{1/2}(-\Delta+I)^{-\frac12-m}\in \mathbf S_{2p}.
$$
Under our restrictions on $p$, $\rho$, $m$ we have
$$
\int_{\R^d} (\abs{V(x)}^{1/2})^{2p}\,dx<\infty
\qquad \text{ and } \qquad
\int_{\R^d}((\abs{\xi}^2+1)^{-\frac12-m})^{2p}\,d\xi<\infty \,,
$$
and therefore the above inclusion follows from the Kato--Seiler--Simon bound \cite[Thm. 4.1]{S1}.

(ii)
By the standard (operator norm) limiting absorption principle, the derivatives
$F_0'(\lambda)$, $F'(\lambda)$ exist for all $\lambda>0$ and are given by
\begin{equation}
F_0'(\lambda)=\frac1\pi \im T_0(\lambda+i0) \,, 
\qquad
F'(\lambda)=\frac1\pi \im T(\lambda+i0) \,, 
\label{eq:6c}
\end{equation}
where $T_0$, $T$ are defined in \eqref{eq:1a}. 
The inclusion $F_0'(\lambda)\in \mathbf S_p$, $p>(d-1)/(\rho-1)$ and
the H\"older continuity of this operator in $\mathbf S_p$ norm are well 
known facts; see, e.g., \cite[Lemma 8.1.8]{Ya2}.
The corresponding statements concerning $F'(\lambda)$ follow by an 
application of the resolvent identity. More specifically, one of the versions
of the resolvent identity can be written as
\begin{equation}
T(z)=V_0-(I+V_0T_0(z))^{-1}V_0.
\label{eq:6d}
\end{equation}
Taking the imaginary part here and using \eqref{eq:6d} again, we obtain
\begin{multline*}
\im T(z)
=
(I+V_0T_0(z))^{-1}V_0(\im T_0(z))V_0(I+T_0(z)^*V_0)^{-1}
\\
=
(V_0-T(z))(\im T_0(z))(V_0-T(z)^*).
\end{multline*}
Passing to the limit $z\to\lambda+i0$ and using \eqref{eq:6c}, we arrive at 
the identity
$$
F'(\lambda)=(V_0-T(\lambda+i0)) F_0'(\lambda) (V_0-T(\lambda+i0)^*). 
$$
This yields the required statements for $F'(\lambda)$. 
\end{proof}

\subsection{Key ideas in the proof of Theorem~\ref{thm1}}\label{sec:1.4}

The main task is to prove Theorem~\ref{thm1} for the operator $\Pi_\eps^{(1)}(\lambda_*)$; the 
statement for $\Pi_\eps^{(2)}(\lambda_*)$ easily follows by some monotonicity arguments. 
In what follows, for simplicity of notation we take $\lambda_*=0$
and set $\Pi^{(j)}_\eps:=\Pi^{(j)}_\eps(0)$, $a_\ell:=a_\ell(0)$. 
Our first step is a spectral localisation lemma (Lemma \ref{lem4}): we show that 
the operator $\Pi^{(1)}_\eps$,
\begin{equation}
\Pi^{(1)}_\eps 
= 
\1_{(-\infty,-\eps)}(H_0) 
\1_{(\eps,\infty)} (H)
\1_{(-\infty,-\eps)}(H_0) \,,
\label{eq:16a}
\end{equation} 
can be replaced by the operator
\begin{equation}
\label{eq:16}
\wt \Pi^{(1)}_\eps
= 
\1_{(-\delta,-\eps)}(H_0) 
\1_{(\eps,\delta)} (H)
\1_{(-\delta,-\eps)}(H_0)\,,
\end{equation}
where $\delta$ is defined in Assumption~\ref{ass1}. 
This is a standard argument using resolvent identities and some functional calculus for self-adjoint operators. 
Next, the key step is the product representation (Lemma~\ref{factorization})
\begin{equation}
\1_{(\eps,\delta)}(H)\1_{(-\delta,-\eps)}(H_0) 
= 
\mathcal Z_\eps  \left(\mathcal Z_\eps^{(0)}\right)^*.
\label{eq:22}
\end{equation}
Here, the operators 
$\mathcal Z_\eps, \mathcal Z^{(0)}_\eps: L^2(\R_+,\mathcal K)\to\mathcal H$  are defined by
\begin{align}
\mathcal Z_\eps^{(0)}f & = \int_0^\infty e^{t H_0} \1_{(-\delta,-\eps)}(H_0)G^*f(t)\,dt,
\label{eq:22a}
\\
\mathcal Z_\eps f & = \int_0^\infty e^{-t H} \1_{(\eps,\delta)}(H) G^*V_0f(t)\,dt.
\label{eq:22b}
\end{align}
Of course, from  \eqref{eq:22} it follows that 
the operator $\wt \Pi_\eps^{(1)}$ 
admits the factorization
$$
\wt \Pi_\eps^{(1)}
= 
\mathcal Z_\eps^{(0)} \left(\mathcal Z_\eps\right)^* \mathcal Z_\eps  \left(\mathcal Z_\eps^{(0)}\right)^*.
$$
Further, it turns out that the products $K_\eps={(\mathcal Z_\eps)}^* \mathcal Z_\eps$ 
and $K_\eps^{(0)}={(\mathcal Z_\eps^{(0)})}^* \mathcal Z_\eps^{(0)}$ 
are integral Hankel type operators in 
$L^2(\R_+,\mathcal K)$. That is, these operators have the form 
\begin{equation}
K_\eps:
f\mapsto \int_0^\infty k_\eps(t+s)f(s) ds, \quad t>0,
\label{eq:23c}
\end{equation}
where $k_\eps=k_\eps(t)$ is some operator valued function, 
called the \emph{kernel of $K_\eps$}.
As $\eps\to0$, the operators  $K_\eps$, $K_\eps^{(0)}$ can be approximated 
in $\mathbf S_p$ norm by some ``model Hankel operators'' with explicit integral kernels. 
By using the $\approx$ relation (see Section~\ref{sec:1.3}), this allows us to 
reduce the problem to computing traces of powers of these model Hankel operators (Lemma \ref{lem2}). 
The latter turns out to be a relatively easy task. 

\begin{remark}
In \cite{GKMO}, the authors consider the traces $\tr (\Pi_\eps(\lambda))^n$ for the regularisation
$\Pi_\eps(\lambda)$ similar to our $\Pi_\eps^{(2)}(\lambda)$. Through a series of transformations, 
the computation of the leading term of the asymptotics of this trace is reduced to the evaluation 
of some explicit multiple ($n$-fold) integral. 
It is curious that Hankel operators do appear in \cite{GKMO}, but only in passing, as a 
tool for evaluation of this integral. One of the points of this work is to emphasize that
Hankel operators are at the heart of the matter here. 
\end{remark}

\begin{remark}
Much of the technique of the paper is borrowed from \cite{P1,PYa1}. 
Crucially, the idea of the factorization \eqref{eq:22} and the analysis of the operators $K_\eps$ and $K_\eps^{(0)}$ 
comes from \cite{P1}.
\end{remark}

\subsection{The structure of the paper}
In Section~\ref{sec:1a} we prove the preliminary Lemma~\ref{lem1}.
In Sections~\ref{sec:2} and \ref{sec:3} we prepare some auxiliary statements concerning Hankel
operators. More precisely, in Section~\ref{sec:2} we compute the asymptotics 
of traces of powers of a model Hankel operator and in Section~\ref{sec:3} we 
present some $\mathbf S_p$ class estimates for operator valued integral Hankel operators. 
In Section~\ref{sec:5} we analyse the operators $\mathcal Z_\eps$, $\mathcal Z_\eps^{(0)}$, 
see \eqref{eq:22a}, \eqref{eq:22b}. 
In Section~\ref{sec:4} we prove the spectral localization lemma, which reduces the 
analysis of $\Pi_\eps^{(1)}$ to that of $\wt \Pi_\eps^{(1)}$, see \eqref{eq:16a}, \eqref{eq:16}. 
In Section~\ref{sec:6} we prove the main results of the paper.

\section{Proof of Lemma \ref{lem1}}\label{sec:1a}

Here we prepare some auxiliary statements which will be required in the 
proof of the spectral localization lemma ($=$ Lemma~\ref{lem4}) 
and prove Lemma~\ref{lem1}.

\begin{lemma}\label{lem4a}
Let Assumption~\ref{ass0} hold true. Then 
$$
G(H+M)^{-\frac12-m}\in \mathbf S_{2p}
$$
for all $M>-\inf\sigma(H)$ and the same exponents $m$, $p$ as in \eqref{eq:0}.
\end{lemma}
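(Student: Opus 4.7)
The plan is to transfer the Schatten-class inclusion from $H_0$ to $H$ by combining the second resolvent identity with an integral representation of the fractional power. As a preliminary, observe that the specific value of $M$ in Assumption~\ref{ass0} is flexible: by the spectral theorem applied to $H_0$, the operator $(H_0+M_1)^{-1/2-m}(H_0+M)^{1/2+m}$ is bounded for every $M_1>-\inf\spec H_0$, so $G(H_0+M_1)^{-1/2-m}\in\mathbf S_{2p}$ for every such $M_1$. Choose $M_1$ large enough that both $H_0+M_1$ and $H+M$ are strictly positive. It then suffices to show that the difference $G(H+M)^{-1/2-m}-G(H_0+M_1)^{-1/2-m}$ lies in $\mathbf S_{2p}$.

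For $s:=\tfrac12+m\in(0,1)$, the Balakrishnan formula $A^{-s}=\tfrac{\sin\pi s}{\pi}\int_0^\infty t^{-s}(A+t)^{-1}\,dt$ together with the second resolvent identity
\[
(H+M+t)^{-1}-(H_0+M_1+t)^{-1}=-(H+M+t)^{-1}(G^*V_0G+M-M_1)(H_0+M_1+t)^{-1}
\]
reduce the difference to an integral whose principal integrand has the schematic form $t^{-s}\,G(H+M+t)^{-1}G^*\cdot V_0\cdot G(H_0+M_1+t)^{-1}$. The first factor is uniformly bounded in $t$ with $O(1/(1+t))$ decay, and the last factor is estimated in $\mathbf S_{2p}$ norm via the decomposition $G(H_0+M_1+t)^{-1}=G(H_0+M_1)^{-1/2-m}\cdot(H_0+M_1)^{1/2+m}(H_0+M_1+t)^{-1}$, with the operator-norm behaviour of the last factor controlled by functional calculus. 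These combine to give an integrable integrand in $\mathbf S_{2p}$ norm, placing the difference in $\mathbf S_{2p}$; the scalar shift term coming from $M-M_1$ is handled analogously after a further use of the resolvent identity to reintroduce a factor of $G$.

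The main obstacle is the case $m\geq\tfrac12$: then $s\geq 1$ lies outside the range of the Balakrishnan formula, and the functional-calculus bound on $(H_0+M_1)^{1/2+m}(H_0+M_1+t)^{-1}$ is no longer decaying in $t$. One resolves this by first extracting integer powers of $(H+M)^{-1}$ via repeated application of the plain resolvent identity, reducing $s$ to its fractional part in $[0,1)$ and invoking the previous argument for the remainder; an alternative is to use the semigroup representation $A^{-s}=\tfrac{1}{\Gamma(s)}\int_0^\infty t^{s-1}e^{-tA}\,dt$ together with Duhamel's principle. Either way the algebraic structure of the estimate is unchanged, and what is delicate is the bookkeeping of the $t$-dependence required to secure integrability throughout.
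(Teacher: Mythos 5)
You take a genuinely different route from the paper, which adapts the Reed--Simon argument: expand $(H+M)^{-1}$ in a Neumann series in $h_0^{-1/2}(-V)h_0^{-1/2}$, raise to the $m$-th power, interpolate between $Gh_0^{-1/2}\in\mathbf S_\infty$ and $Gh_0^{-1/2-m}\in\mathbf S_{2p}$ to get $Gh_0^{-1/2-\ell}\in\mathbf S_{2pm/\ell}$, and apply H\"older to each term. That handles all $m\geq 0$ uniformly (and absorbs the extra half-power via the boundedness of $h_0^{1/2}h^{-1/2}$). Your approach, via the Balakrishnan representation plus the second resolvent identity, is cleaner for $s=\tfrac12+m<1$: the integrand estimate
$$
t^{-s}\,\bigl\|G(H+M+t)^{-1}G^*\bigr\|\,\|V_0\|\,\bigl\|G(H_0+M_1)^{-s}\bigr\|_{2p}\,\bigl\|(H_0+M_1)^s(H_0+M_1+t)^{-1}\bigr\|
\lesssim t^{-s}\,\frac{1}{1+t}\min(1,t^{s-1})
$$
is indeed integrable on $(0,\infty)$, and the scalar-shift term is most cheaply removed by taking $M$ large and $M_1=M$, which your flexibility observation already licenses. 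So for $m<\tfrac12$ the argument is complete.

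However, for $m\geq\tfrac12$ there is a real gap, and the two fixes you sketch do not obviously close it. The first fix --- factor out $(H+M)^{-n}$ with $n=\lfloor s\rfloor$ and ``invoke the previous argument for the remainder'' --- does not improve the Schatten exponent: $(H+M)^{-n}$ is merely bounded, so from $G(H+M)^{-s'}\in\mathbf S_q$ (and the best one can hope for at the smaller exponent $s'<s$ is $q=2ps/s'>2p$ by interpolation) one cannot conclude $G(H+M)^{-s}\in\mathbf S_{2p}$ by appending a bounded factor. To get the Schatten gain you must expand $(H+M)^{-n}$ via the resolvent identity in terms of $H_0$ and $V$ and redistribute the $G$'s and negative powers of $H_0$ --- at which point you have essentially reinvented the paper's Neumann-series/H\"older argument, now intertwined with the fractional remainder. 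The Duhamel alternative has its own obstruction: in the Dyson integrand, $\|Ge^{-\tau H_0}\|_{2p}\lesssim \tau^{-1/2-m}$ near $\tau=0$, so $\int_0^t \tau^{-1/2-m}\,d\tau$ diverges when $m\geq\tfrac12$; a single application of Duhamel does not suffice, and the full iterated expansion is again the perturbation series in disguise. Neither fix is a minor bookkeeping matter; each requires a new estimate that you have not supplied.
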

\begin{proof}
This is a straightforward adaptation of the argument of the proof of \cite[Theorem XI.12]{RS3}, 
where a variant of the above statement was proven for $p=1/2$. For completeness, below we outline the proof. 
Choose $M>-\min\{\inf\sigma(H_0),\inf\sigma(H)\}$ sufficiently large so that 
\begin{equation}
\norm{(H_0+M)^{-\frac12}V(H_0+M)^{-\frac12}}\leq r<1.
\label{eq:17d}
\end{equation}
In order to make our formulas below more readable, set $h_0=H_0+M$ and $h=H+M$. 
Write
$$
Gh^{-m-\frac12}=Gh^{-m}h_0^{-\frac12}h_0^{\frac12}h^{-\frac12}.
$$
Since the operators $h_0$ and $h$ have the same form domain, the product $h_0^{\frac12}h^{-\frac12}$ is bounded. 
We see that it suffices to prove the inclusion
\begin{equation}
Gh^{-m}h_0^{-\frac12}\in \mathbf S_{2p}.
\label{eq:17b}
\end{equation}
We have
$$
h^{-1}
=
h_0^{-\frac12}\left\{\sum_{j=0}^\infty (h_0^{-\frac12}(-V)h_0^{-\frac12})^j\right\}h_0^{-\frac12},
$$
where by \eqref{eq:17d} the series converges in the operator norm. It follows that
\begin{equation}
Gh^{-m}h_0^{-\frac12}
=
Gh_0^{-\frac12}
\sum_k \prod_{i=1}^k(h_0^{-\frac12}(-V)h_0^{-\frac12-\ell_i}),
\label{eq:17c}
\end{equation}
where the sum is taken over the set of terms with $\ell_1+\dots+\ell_k=m$. 
By interpolation between the two inclusions in \eqref{eq:0} we obtain
$$
Gh_0^{-\frac12-\ell}\in\mathbf S_{2pm/\ell}, \quad 0<\ell\leq m,
$$
and therefore, using the H\"older inequality for Schatten classes, 
we see that each term in \eqref{eq:17c} satisfies
$$
\prod_{i=1}^k(h_0^{-\frac12}(-V)h_0^{-\frac12-\ell_i}) \in \mathbf S_{2p}, 
\quad
\ell_1+\dots+\ell_k=m.
$$
Moreover, as in \cite[Theorem XI.12]{RS3}, using condition \eqref{eq:17d}, we obtain 
the estimate
$$
\Norm{\prod_{i=1}^k(h_0^{-\frac12}(-V)h_0^{-\frac12-\ell_i})}_{2p}\leq Cr^k
$$
for each term in the series \eqref{eq:17c} over $k$. 
It follows that 
the series in \eqref{eq:17c} converges absolutely in the norm of $\mathbf S_{2p}$.
Thus, we obtain \eqref{eq:17b}. 
\end{proof}

\begin{lemma}\label{lem5}
Let Assumption~\ref{ass0} hold true.  Then for any $\lambda_1<\lambda_2$, we have
\begin{align}
\1_{(-\infty,\lambda_1)}(H_0)
\1_{(\lambda_2,\infty)}(H)
&\in \mathbf S_{2p},
\label{eq:18a}
\\
\1_{(-\infty,\lambda_1)}(H)
\1_{(\lambda_2,\infty)}(H_0)
&\in \mathbf S_{2p}.
\label{eq:18b}
\end{align}
\end{lemma}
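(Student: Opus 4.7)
The plan is to reduce the assertion to a smooth functional-calculus difference, then extract $\mathbf S_{2p}$-membership from the factorized form of $V$ by iterated resolvent identities.

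I would pick $\phi\in C^\infty(\R)$ with $\phi\equiv 0$ on $(-\infty,\lambda_1]$ and $\phi\equiv 1$ on $[\lambda_2,\infty)$. Since $\phi(H_0)\1_{(-\infty,\lambda_1)}(H_0)=0$ and $\phi(H)\1_{(\lambda_2,\infty)}(H)=\1_{(\lambda_2,\infty)}(H)$ by the spectral theorem,
\[
\1_{(-\infty,\lambda_1)}(H_0)\1_{(\lambda_2,\infty)}(H)=\1_{(-\infty,\lambda_1)}(H_0)\bigl[\phi(H)-\phi(H_0)\bigr]\1_{(\lambda_2,\infty)}(H),
\]
reducing the task to $\phi(H)-\phi(H_0)\in\mathbf S_{2p}$. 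Using lower semi-boundedness of $H_0$ and $H$, I replace $1-\phi$ by an $\eta\in C_c^\infty(\R)$ agreeing with $1-\phi$ on $\sigma(H_0)\cup\sigma(H)$, so that it suffices to prove $\eta(H)-\eta(H_0)\in\mathbf S_{2p}$.

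For $\eta\in C_c^\infty$, I fix a large integer $N$ and factor $\eta(x)=(x+M)^{-N}g(x)$ with $g\in C_c^\infty$, giving
\[
\eta(H)-\eta(H_0)=\bigl[(H+M)^{-N}-(H_0+M)^{-N}\bigr]g(H)+(H_0+M)^{-N}\bigl[g(H)-g(H_0)\bigr].
\]
The first summand is in $\mathbf S_{2p}$ by the iterated resolvent identity
\[
(H_0+M)^{-N}-(H+M)^{-N}=\sum_{k=0}^{N-1}(H_0+M)^{-(N-k)}G^*V_0 G(H+M)^{-(k+1)},
\]
each term lying in $\mathbf S_{2p}$ by H\"older because, for $N\geq 2m$, at least one of $(H_0+M)^{-(N-k)}G^*$ or $G(H+M)^{-(k+1)}$ is in $\mathbf S_{2p}$ by Assumption~\ref{ass0} and Lemma~\ref{lem4a}. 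For the second summand I would use the Helffer--Sj\"ostrand representation
\[
g(H)-g(H_0)=\frac{1}{\pi}\int_{\C}\bar\partial\tilde g(z)(H-z)^{-1}V(H_0-z)^{-1}\,dA(z)
\]
and absorb $(H_0+M)^{-N}$ into $G^*$ via $G(H_0+M)^{-N}\in\mathbf S_{2p}$; with a high-order almost analytic extension $\tilde g$, the integrand's $\mathbf S_{2p}$-norm is $O(|\im z|^{-2})$ and is integrable against $|\bar\partial\tilde g|$.

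The main technical hurdle is the unboundedness of $(H+M)^\alpha(H-z)^{-1}$ for $\alpha>1$, which is what forces the $(x+M)^{-N}$-factorization and the higher-order almost analytic extension in place of a direct first-order Helffer--Sj\"ostrand bound. The inclusion \eqref{eq:18b} then follows by symmetry, swapping $H$ and $H_0$ and invoking Lemma~\ref{lem4a} to supply the Schatten hypotheses for $H$.
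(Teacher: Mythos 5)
Your reduction to a functional-calculus difference is in the spirit of the paper, which also sandwiches a smooth $\psi$ between the projections. But from there the two arguments diverge. The paper never tries to prove $\psi(H)-\psi(H_0)\in\mathbf S_{2p}$ on its own; it keeps the projection $\1_{(-\infty,\lambda_1)}(H_0)$ attached, and the combination $G\1_{(-\infty,\lambda_1)}(H_0)\in\mathbf S_{2p}$ (which follows at once from $G(H_0+M)^{-1/2-m}\in\mathbf S_{2p}$ and the lower-boundedness of $H_0$) supplies the Schatten factor inside a single first-order Helffer--Sj\"ostrand integral. This makes the $(x+M)^{-N}$-factorization and the iterated resolvent identity entirely unnecessary. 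Your route proves the stronger intermediate assertion $\eta(H)-\eta(H_0)\in\mathbf S_{2p}$ for $\eta\in C_c^\infty$, which is of independent interest, at the price of the extra machinery you describe; both approaches end at the same place, but the paper's is more economical precisely because it exploits the available projections rather than discarding them.

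There is one concrete glitch you should fix. In the second summand you write $(H_0+M)^{-N}[g(H)-g(H_0)]$ and then expand the resolvent difference as $(H-z)^{-1}V(H_0-z)^{-1}$, putting the $H$-resolvent on the left. But then $(H_0+M)^{-N}$ sits to the left of $(H-z)^{-1}$ and cannot reach $G^*$, so ``absorbing $(H_0+M)^{-N}$ into $G^*$'' does not go through as written. Use instead the other form of the resolvent identity, $(H-z)^{-1}-(H_0-z)^{-1}=-(H_0-z)^{-1}G^*V_0G(H-z)^{-1}$; then $(H_0+M)^{-N}(H_0-z)^{-1}G^*=(H_0-z)^{-1}\bigl(G(H_0+M)^{-N}\bigr)^*$, the bracketed factor lies in $\mathbf S_{2p}$, and the remaining operator norms give the claimed $O(|\im z|^{-2})$ bound. (Alternatively, flip the decomposition so that $(H_0+M)^{-N}$ appears on the right and pair it with $G(H_0-z)^{-1}$.) With this correction the argument is sound, including the counting $N\geq 2m$ which guarantees that in the telescoping sum $\sum_{k=0}^{N-1}(H_0+M)^{-(N-k)}G^*V_0G(H+M)^{-(k+1)}$ each term has at least one $\mathbf S_{2p}$ factor.
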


\begin{proof}
First note that by Assumption~\ref{ass0} and by  Lemma~\ref{lem4a}, we have
\begin{equation}
G\1_{(-\infty,\lambda)}(H_0)\in \mathbf S_{2p}, 
\quad
G\1_{(-\infty,\lambda)}(H)\in \mathbf S_{2p}, 
\quad
\forall \lambda\in\R.
\label{eq:18c}
\end{equation}
Next, let $\lambda_{\min}<\min\{\inf\sigma(H_0), \inf\sigma(H)\}$. 
Choose a function $\psi\in C_0^\infty(\R)$ such that
$$
\psi(x)
=
\begin{cases}
0 & \text{ for $x\leq \lambda_{\min}-1$ and for $x\geq\lambda_2$,}
\\
1 & \text{ for $\lambda_{\min}\leq x\leq \lambda_1$.}
\end{cases}
$$
Then 
$$
\1_{(-\infty,\lambda_1)}(H_0)\psi(H_0)
=
\1_{(-\infty,\lambda_1)}(H_0)
\quad\text{ and }\quad
\psi(H) \1_{(\lambda_2,\infty)}(H)=0\, ,
$$
and therefore
\begin{equation}
\1_{(-\infty,\lambda_1)}(H_0)\1_{(\lambda_2,\infty)}(H)
=
\1_{(-\infty,\lambda_1)}(H_0)(\psi(H_0)-\psi(H))\1_{(\lambda_2,\infty)}(H)\, .
\label{eq:18f}
\end{equation}
Since $\psi\in C_0^\infty(\R)$, we can use a standard method based on 
almost analytic continuation of $\psi$ (see e.g. \cite[Chapter 8]{DS})
to represent it as
$$
\psi(\lambda)=\int_\C\frac{\nu(z)}{\lambda-z}dL(z),
$$
where $dL(z)$ is the 2-dimensional Lebesque measure in $\C$, and $\nu$ is some 
function, compactly supported in $\C$ and satisfying the estimate
\begin{equation}
\nu(z)=O(\abs{\im z}^N), \quad \im z\to0, \quad \forall N>0.
\label{eq:18d}
\end{equation}
Then, by the resolvent identity, 
\begin{equation}
\1_{(-\infty,\lambda_1)}(H_0)(\psi(H_0)-\psi(H))
=
\int_\C \1_{(-\infty,\lambda_1)}(H_0)(H_0-z)^{-1}V(H-z)^{-1}\nu(z)dL(z).
\label{eq:18e}
\end{equation}
Let us prove that the above integral converges absolutely in $\mathbf S_{2p}$. 
We have
\begin{multline*}
\1_{(-\infty,\lambda_1)}(H_0)(H_0-z)^{-1}V(H-z)^{-1}
\\
=
(H_0-z)^{-1} (G\1_{(-\infty,\lambda_1)}(H_0))^*V_0 (G(H-i)^{-1})(H-i)(H-z)^{-1}, 
\end{multline*}
and therefore
\begin{multline*}
\norm{\1_{(-\infty,\lambda_1)}(H_0)(H_0-z)^{-1}V(H-z)^{-1}}_{2p}
\\
\leq
\norm{(H_0-z)^{-1}}\norm{G\1_{(-\infty,\lambda_1)}(H_0)}_{2p}
\norm{V_0}\norm{G(H-i)^{-1}}\norm{(H-i)(H-z)^{-1}}
\\
=
O(\abs{\im z}^{-2}), \quad \im z\to0.
\end{multline*}
Combining this with \eqref{eq:18d}, we obtain that the integral in \eqref{eq:18e}
converges absolutely in the norm of $\mathbf S_{2p}$. 
In view of \eqref{eq:18f}, this yields the inclusion \eqref{eq:18a}. 
The second inclusion \eqref{eq:18b} is proven by following exactly the same sequence
of steps. 
\end{proof}

\begin{proof}[Proof of Lemma \ref{lem1}]
We have
$$
F_0(\lambda)=(G\1_{(-\infty,\lambda)}(H_0))(G\1_{(-\infty,\lambda)}(H_0))^*,
$$
where, by  the first inclusion in \eqref{eq:18c}, both factors are in $\mathbf S_{2p}$. 
Using the H\"older inequality for Schatten classes, we obtain that $F_0(\lambda)\in \mathbf S_{p}$. 
Similarly, using the second inclusion  in \eqref{eq:18c}, we obtain $F(\lambda)\in \mathbf S_{p}$. 
Further, $\Pi^{(1)}_\eps$ can be written as
$$
\Pi^{(1)}_\eps
=
\bigl(\1_{(-\infty,-\eps)}(H_0)\1_{(\eps,\infty)}(H)\bigr)
\bigl(\1_{(\eps,\infty)}(H)\1_{(-\infty,-\eps)}(H_0)\bigr)\,,
$$
where both terms are in $\mathbf S_{2p}$ by Lemma~\ref{lem5}.
Thus, $\Pi_\eps^{(1)}\in \mathbf S_p$. 
Finally, consider $\Pi^{(2)}_\eps$; by the definition of the functions $\psi_\eps^{\pm}$, we have
\begin{align*}
\Pi^{(2)}_\eps
& =
\psi_\eps^{-}(H_0)
\psi_\eps^{+}(H)
\psi_\eps^{-}(H_0)
\\
& =
\psi_\eps^{-}(H_0)
\1_{(-\infty,-\eps)}(H_0)\1_{(\eps,\infty)}(H)
\psi_\eps^{+}(H)
\1_{(\eps,\infty)}(H)\1_{(-\infty,-\eps)}(H_0)
\psi_\eps^{-}(H_0)\,,
\end{align*}
and so the result again follows by  Lemma~\ref{lem5}.
\end{proof}


\section{Spectral density of a Hankel operator}\label{sec:2}

For $0<\eps\leq\delta$ let $\Gamma_\eps$ be the Hankel-type integral operator 
in $L^2(\R_+)$ with the integral kernel $\gamma_\eps(s+t)$, $s,t\in\R_+$, 
where $\gamma_\eps$ is given by
$$
\gamma_\eps(t)
=
\int_\eps^\delta e^{-t\lambda}\,d\lambda = \frac{e^{-t\eps}-e^{-t\delta}}{t} \,,  
\qquad t\in\R_+ \,.
$$

\begin{lemma}\label{lem2}
The operator $\Gamma_\eps$ belongs to the trace class and satisfies
\begin{equation}
\label{eq:6}
\Gamma_\eps\geq 0 \,,
\qquad
\|\Gamma_\eps\|\leq \pi \,.
\end{equation}
Moreover, for any $q\geq 1$, one has
\begin{equation}
\label{eq:7}
\lim_{\eps\to+0} |\ln\eps|^{-1} \tr \left(\Gamma_\eps\right)^q 
= 
\frac{1}{2\pi} \int_{-\infty}^\infty \left( \frac{\pi}{\cosh(\pi x)}\right)^q dx \,.
\end{equation}
\end{lemma}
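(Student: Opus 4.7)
The plan is to view $\Gamma_\eps$ as a Gramian whose companion Gramian is a Carleman-type operator on $L^2(\eps,\delta)$, convert that via a Mellin-type change of variable into a truncated convolution on an interval of length $L=\ln(\delta/\eps)$, and then extract the leading behaviour of the traces by a Fubini argument.

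I would first introduce $Z_\eps\colon L^2(\eps,\delta)\to L^2(\R_+)$ by $(Z_\eps g)(s)=\int_\eps^\delta e^{-s\lambda}g(\lambda)\,d\lambda$. The identity $\gamma_\eps(s+t)=\int_\eps^\delta e^{-s\lambda}e^{-t\lambda}\,d\lambda$ gives $\Gamma_\eps=Z_\eps Z_\eps^*$, whence $\Gamma_\eps\geq 0$; since $Z_\eps$ is Hilbert--Schmidt (because $\int_0^\infty\!\int_\eps^\delta e^{-2s\lambda}\,d\lambda\,ds<\infty$), $\Gamma_\eps$ is trace class. By the $\approx$ relation of Section~\ref{sec:1.3}, $\Gamma_\eps\approx Z_\eps^*Z_\eps$, and the latter is the integral operator on $L^2(\eps,\delta)$ with kernel $(\lambda+\mu)^{-1}$. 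Under the unitary $U\colon L^2(\eps,\delta)\to L^2((\ln\eps,\ln\delta),dx)$, $(Uf)(x)=e^{x/2}f(e^x)$, this kernel turns into the convolution kernel $k(x-y)$ with $k(u)=\tfrac{1}{2\cosh(u/2)}$; a subsequent translation produces an operator $T_L$ on $L^2(0,L)$ with the same convolution kernel, unitarily equivalent to $Z_\eps^*Z_\eps$.

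The bound in \eqref{eq:6} is then immediate: the full convolution on $L^2(\R)$ has operator norm $\|\hat k\|_{L^\infty}=\pi$ by Plancherel (using $\hat k(\xi)=\pi/\cosh(\pi\xi)$), and restriction to $L^2(0,L)$ can only decrease this. For the trace asymptotics I expand
\begin{equation*}
\tr T_L^q=\int_{[0,L]^q}k(x_1-x_2)k(x_2-x_3)\cdots k(x_q-x_1)\,dx_1\cdots dx_q
\end{equation*}
and change variables $y_1=x_1$, $y_j=x_j-x_1$ for $j\geq 2$, rewriting the integral as
\begin{equation*}
\int_0^L dy_1\int_{[-y_1,L-y_1]^{q-1}} F(y_2,\dots,y_q)\,dy_2\cdots dy_q,
\end{equation*}
with $F(y_2,\dots,y_q)=k(-y_2)k(y_2-y_3)\cdots k(y_{q-1}-y_q)k(y_q)$. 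The bound $k(u)\leq e^{-|u|/2}$ makes $F$ exponentially integrable on $\R^{q-1}$, and Fourier inversion gives $\int_{\R^{q-1}}F\,dy=k^{*q}(0)=\frac{1}{2\pi}\int_\R\hat k(\xi)^q\,d\xi=\frac{1}{2\pi}\int_\R(\pi/\cosh(\pi\xi))^q\,d\xi$. A Fubini comparison of the inner integral with its full-space counterpart, combined with the finite moment bound $\int_{\R^{q-1}}|F|(1+|y_j|)\,dy<\infty$ for each $j$, yields $\tr T_L^q=L\cdot k^{*q}(0)+O(1)$ as $L\to\infty$. Since $L=|\ln\eps|+\ln\delta$, dividing by $|\ln\eps|$ and sending $\eps\to 0$ delivers \eqref{eq:7}.

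The only technical point of substance is the $O(1)$ boundary correction in the last step, which is where the exponential decay of $k$ genuinely enters; everything else is bookkeeping around the factorization and the Mellin change of variable. That correction can be controlled because the set of $y_1\in(0,L)$ for which $(y_2,\dots,y_q)\notin[-y_1,L-y_1]^{q-1}$ has Lebesgue measure at most $\sum_{j\geq 2}|y_j|$, and these first moments of $|F|$ are finite.
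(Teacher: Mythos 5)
Your reduction is essentially the same as the paper's: you pass from $\Gamma_\eps$ to the companion Gramian $Z_\eps^* Z_\eps$, which is the restricted Carleman operator with kernel $(\lambda+\mu)^{-1}$ on $L^2(\eps,\delta)$, and then apply the Mellin/exponential change of variable to obtain a truncated convolution with kernel $1/(2\cosh(u/2))$ on an interval of length $L=\ln(\delta/\eps)$. The paper reaches the equivalent operator $\1_{(\ln\eps,\ln\delta)}(X)\,b(D)\,\1_{(\ln\eps,\ln\delta)}(X)$ by the same two moves in a slightly different order ($\approx$ first, $U$-conjugation second). Where you differ is in the endgame: the paper invokes the Laptev--Safarov Szeg\H{o}-type estimate \eqref{eq:10} to get $\tr f(PBP)=\tr Pf(B)P+O(1)$ for $f\in C^2$, whereas you compute $\tr T_L^q$ directly by expanding the iterated kernel, Fubini-ing, and bounding the boundary correction via the exponential decay of the kernel. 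Your version is more elementary and self-contained (no appeal to \cite{LaSa1}), and it gives the sharper $O(1)$ remainder for integer $q$ without extra work.

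However, there is a genuine gap: the iterated-kernel identity
$$
\tr T_L^q=\int_{[0,L]^q}k(x_1-x_2)\cdots k(x_q-x_1)\,dx_1\cdots dx_q
$$
is valid only for \emph{integer} $q\ge 1$, while the lemma (and its use in the proof of Lemma~\ref{lem8}, where one needs $q\ge p$ for an arbitrary real $p\ge 1$) requires all real $q\ge 1$. For non-integer $q$, $\tr T_L^q=\sum_n\lambda_n^q$ has no such kernel representation, and your Fubini argument does not apply. The paper closes exactly this gap: it uses the Laptev--Safarov estimate for $f(t)=t^q$ with real $q\ge 2$ (where $f\in C^2([0,\pi])$), treats $q=1$ by a direct trace computation, and covers $1<q<2$ by a Weierstrass approximation of $g$ in $f(t)=tg(t)$, controlling the error via $\tr\Gamma_\eps=O(|\ln\eps|)$. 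You would need to append a similar approximation step: once your integer-$q$ result gives the asymptotics for $f(t)=tP(t)$ with $P$ any polynomial, approximate $t^{q-1}$ uniformly on $[0,\pi]$ by polynomials and use $|\tr(\Gamma_\eps\, g(\Gamma_\eps))-\tr(\Gamma_\eps\, P(\Gamma_\eps))|\le\|g-P\|_\infty\,\tr\Gamma_\eps$ together with your $q=1$ estimate to pass to the limit. With that addition the proof is complete; as written it only establishes the claim for integer $q$.
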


\noindent\textit{Remark.}
In fact, one can check that $\Gamma_\eps\in\mathbf S_q$ for all $q>0$ and that \eqref{eq:7} holds for all $q>0$.

\medskip

The proof of Lemma \ref{lem2} relies on some well-known facts about the Carleman operator, that is, 
the Hankel operator in $L^2(\R_+)$ with the integral kernel $(t+s)^{-1}$, which we recall next. 
Let $\mathcal L$ be the (self-adjoint) operator of the Laplace transform in $L^2(\R_+)$:
\begin{equation}
(\mathcal L f)(t)=\int_0^\infty e^{-xt}f(x)dx, \quad
f\in L^2(\R_+).
\label{eq:laplace}
\end{equation}
Clearly, the Carleman operator can be written as $\mathcal L^2$. 
This operator can be explicitly diagonalized. Namely, let $U:L^2(\R_+)\to L^2(\R)$ be the unitary operator defined by
$$
(Uf)(x) = e^{x/2} f(e^x) \,,
\qquad x\in\R \,,\ f\in L^2(\R_+) \,.
$$
By an explicit calculation
(see e.g. \cite[Section 10.2]{Peller}), we obtain that 
$U \mathcal L^2 U^*$ is the operator of convolution 
with the function $1/(2\cosh(x/2))$. Computing the Fourier transform of this function,
\begin{equation}
\int_{-\infty}^\infty \frac{e^{-i\xi x}}{2 \cosh(x/2)}dx = \frac{\pi}{\cosh(\pi\xi)}=:b(\xi),
\label{eq:7b}
\end{equation}
we obtain
\begin{equation}
\label{eq:7a}
U \mathcal L^2 U^* = b(D).
\end{equation}
Here, the operator $D$ (as well as the operator $X$, needed later) 
are the self-adjoint operators  given by
\begin{equation}
(Xf)(x) = xf(x) \,,
\qquad
(Df)(x) = -i \frac{d}{dx} f(x) 
\qquad 
\text{ in $L^2(\R)$.}
\label{eq:7c}
\end{equation}
Since $\|b\|_{L^\infty} =\pi$, we note that \eqref{eq:7a} implies, in particular, that
\begin{equation}
\label{eq:8a}
\| \mathcal L \| = \sqrt\pi \,.
\end{equation}

\begin{proof}[Proof of Lemma~\ref{lem2}]
Along with the operator $X$ defined by \eqref{eq:7c}, we will need its half-line version
$$
(X_+f)(x) = xf(x) \,,
\qquad 
\text{ in $L^2(\R_+)$.}
$$
In terms of the Laplace transform $\mathcal L$, our operator $\Gamma_\eps$ can be factorized as
\begin{equation}
\label{eq:8}
\Gamma_\eps = \mathcal L \mathbbm 1_{(\eps,\delta)}(X_+) \mathcal L 
= 
\left( \mathbbm 1_{(\eps,\delta)}(X_+) \mathcal L  \right)^* \left( \mathbbm 1_{(\eps,\delta)}(X_+) \mathcal L  \right) \,.
\end{equation}
This proves that $\Gamma_\eps\geq 0$ and, since it is easy to check that $\mathbbm 1_{(\eps,\delta)}(X_+) \mathcal L \in\mathbf S_2$, it follows that $\Gamma_\eps$ is trace class. Moreover, \eqref{eq:8a} implies $\|\Gamma_\eps\|\leq\pi$.

Let us prove \eqref{eq:7}. 
Using the notation $\approx$ introduced in Section~\ref{sec:1.3}, 
we deduce from \eqref{eq:8} that $\Gamma_\eps \approx \mathbbm 1_{(\eps,\delta)}(X_+) \mathcal L^2 \mathbbm 1_{(\eps,\delta)}(X_+)$. 
Thus, it follows from \eqref{eq:7a} that
$$
\Gamma_\eps \approx 
\1_{(\eps,\delta)}(X_+) U^* b(D) U \1_{(\eps,\delta)}(X_+) 
= U^* \1_{(\ln\eps,\ln\delta)}(X) b(D) \1_{(\ln\eps,\ln\delta)}(X) U.
$$
From here we obtain 
$$
\tr f\left(\Gamma_\eps\right)
= 
\tr f\left( \mathbbm 1_{(\ln\eps,\ln\delta)}(X) b(D) \mathbbm 1_{(\ln\eps,\ln\delta)}(X) \right) \,, 
$$
for any continuous function with $f(0)=0$.
 
Now we first observe that for $q=1$, formula \eqref{eq:7} is a direct calculation 
of the trace of $\mathbbm 1_{(\ln\eps,\ln\delta)}(X) b(D)$. 
For $q\geq 2$, we employ the following result of \cite{LaSa1}. 
Let $P$ be an orthogonal projection in a Hilbert space and let $B$ be a self-adjoint operator 
such that $P B$ is Hilbert--Schmidt. Then for any $f\in C^2(\R)$ with $f(0)=0$, one has
\begin{equation}
\label{eq:10}
\left| \tr f(P BP) - \tr P f(B) P \right| \leq \frac12 \|f''\|_{L^\infty} \|P B(1-P)\|_2^2 \,.
\end{equation}
Let us take
$P = \mathbbm 1_{(\ln\eps,\ln\delta)}(X)$, $B= b(D)$, and $f(t)= t^q$, $q\geq 2$. 
Then 
\begin{equation}
\tr f(P BP)=\tr (\Gamma_\eps)^q\,,
\label{eq:10a}
\end{equation}
and
\begin{align}
\tr P f(B)P & = \frac{1}{2\pi} \int_\R \mathbbm 1_{(\ln\eps,\ln\delta)}(x)\,dx \int_{\R}b(\xi)^q \,d\xi 
\notag \\
& = 
\left( |\ln\eps|+ O(1)\right) \frac{1}{2\pi} \int_{-\infty}^\infty \left( \frac{\pi}{\cosh(\pi x)}\right)^q dx \,.
\label{eq:10b}
\end{align}
Thus, it remains to estimate the right side in \eqref{eq:10}:
$$
\|P B (1-P) \|_2^2 = \|(P B - B P)(1-P)\|_2^2 \leq \| [P,B]\|_2^2 \,.
$$
Formula \eqref{eq:7b} implies that $[P,B]$ has integral kernel
$$
\frac{\mathbbm 1_{(\ln\eps,\ln\delta)}(x) - \mathbbm 1_{(\ln\eps,\ln\delta)}(y)}{2\ \cosh((x-y)/2)} \,,
\qquad x,y\in\R \,.
$$
Thus,
$$
\| [P,B]\|_2^2 
= 
\iint_{\R\times\R} 
\frac{\left(\mathbbm 1_{(\ln\eps,\ln\delta)}(x) - \mathbbm 1_{(\ln\eps,\ln\delta)}(y) \right)^2}{4\ \cosh^2((x-y)/2)} 
\,dx\,dy 
= 
\int_\R \frac{\varphi(z)}{4\ \cosh^2(z/2)} \,dz
$$
with
$$
\varphi(z) = \int_\R \left(\mathbbm 1_{(\ln\eps,\ln\delta)}(y+z) - \mathbbm 1_{(\ln\eps,\ln\delta)}(y) \right)^2 \,dy 
= 
2\min\{|z|,\ln\delta-\ln\eps\} \leq 2|z|.
$$
We obtain $\| [P,B]\|_2^2 \leq \int_\R \frac{|z|}{2\ \cosh^2(z/2)} \,dz<\infty$ uniformly in $\eps>0$.
Returning to \eqref{eq:10}, we obtain
$$
\left| \tr f(P BP) - \tr P f(B)P \right| \leq C \,.
$$
(Here we also used the fact that the $L^\infty$-norm of $f''$ needs only be evaluated on the finite interval $[0,\|B\|]=[0,\pi]$.) 
Combining this with \eqref{eq:10a}, \eqref{eq:10b}, we obtain the required statement for $f(t)=t^q$, $q\geq 2$.

Now assume that $f(t)=t^q$ with $1<q<2$ or, more generally, that $f(t)=t g(t)$ with $g$ continuous on $[0,1]$. Then, for any $\delta>0$, there is a polynomial $P$ with $\|P-g\|_\infty\leq\delta$. Let $f^{(1)}(t) = t P(t)$ and $f^{(2)}(t)= t(g(t)-P(t))$. According to the first part of the proof,
$$
\lim_{\eps\to 0} |\ln\eps|^{-1} \tr f^{(1)}(\Gamma_\eps) = \frac1{2\pi} \int_{-\infty}^\infty f^{(1)}\left(\frac{\pi}{\cosh(\pi x)}\right) dx 
$$
and, by \eqref{eq:10b} with $q=1$,
$$
\left| \tr f^{(2)}(\Gamma_\eps) \right| \leq \delta \tr\Gamma_\eps = \delta (|\ln\eps|+O(1)) \frac{1}{2\pi} \int_{-\infty}^\infty \frac{\pi}{\cosh(\pi x)} dx \leq C_1 \delta |\ln\eps| \,.
$$
On the other hand,
\begin{align*}
& \left| \frac1{2\pi} \int_{-\infty}^\infty f^{(1)}\left(\frac{\pi}{\cosh(\pi x)}\right) dx
- \frac1{2\pi} \int_{-\infty}^\infty f\left(\frac{\pi}{\cosh(\pi x)}\right) dx \right| \\
& \qquad \leq \frac1{2\pi} \int_{-\infty}^\infty \left| f^{(2)}\left(\frac{\pi}{\cosh(\pi x)}\right) \right| dx \\
& \qquad \leq \frac\delta{2\pi} \int_{-\infty}^\infty \frac{\pi}{\cosh(\pi x)} dx = C_2\delta \,.
\end{align*}
Thus,
$$
\limsup_{\eps\to 0} \left| |\ln\eps|^{-1} \tr f(\Gamma_\eps) - \frac1{2\pi} \int_{-\infty}^\infty f\left(\frac{\pi}{\cosh(\pi x)}\right) dx \right| \leq (C_1+C_2) \delta \,.
$$
Since $\delta$ is arbitrary, we obtain the asymptotics for any $f$ of the above form and, in particular, for $f(t)=t^q$, $1<q<2$.
\end{proof}

\begin{remark}
\begin{enumerate}
\item
It is clear from the proof that if $q\geq2$ or $q=1$, then in fact we have a stronger statement:
$$
\tr \left(\Gamma_\eps\right)^q 
= 
|\ln\eps| \frac{1}{2\pi} \int_{-\infty}^\infty \left( \frac{\pi}{\cosh(\pi x)}\right)^q dx
+
O(1), 
\quad 
\eps\to+0.
$$
\item
The crucial fact used in the proof above that $\Gamma_\eps$ is unitarily equivalent to the pseudo-differential operator
\begin{equation*}
\mathbbm 1_{(\eps,\delta)}(e^X)b(D)\mathbbm 1_{(\eps,\delta)}(e^X)
\qquad\text{in}\ L^2(\R) \,,
\end{equation*}
is a special case of a more general result of Yafaev \cite{Ya3}; see also \cite{Widom} for an older related result. 
\item
Note that the standard Berezin--Lieb inequality (see, e.g., \cite{LaSa1}) yields for arbitrary real numbers $q\geq 1$ the one-sided bound
\begin{align*}
\tr \left(\Gamma_\eps\right)^q & \leq
\frac{1}{2\pi} \int_\R \mathbbm 1_{(\ln\eps,\ln\delta)}(x)\,dx \int_{-\infty}^\infty \left( \frac{\pi}{\cosh(\pi \xi)}\right)^q d\xi \\
& = \frac{1}{2\pi} \left( |\ln\eps|+ O(1)\right) \int_{-\infty}^\infty \left( \frac{\pi}{\cosh(\pi \xi)}\right)^q d\xi \,.
\end{align*}
Thus the approximation argument above was only needed for a lower bound for $1<q<2$.
\end{enumerate}
\end{remark}


\section{Estimates for operator-valued Hankel operators}\label{sec:3}

We need some Schatten class estimates for Hankel operators acting in $L^2(\R_+,\mathcal K)$. 
Fix $q\geq1$; 
let $\sigma: \R_+\to \mathbf S_q(\mathcal K)$ be a measurable function and let $K$ be the integral Hankel 
operator in $L^2(\R_+,\mathcal K)$ (see \eqref{eq:23c}) 
with the kernel $k=k(t)$, given by the Laplace transform of $\sigma$:
$$
k(t) = \int_0^\infty e^{-\lambda t} \sigma(\lambda)\,d\lambda \,.
$$

\begin{lemma}\label{lem3}
For $1\leq q<\infty$, one has
$$
\|K\|_q^q \leq \pi^{q-1} \int_0^\infty \|\sigma(\lambda)\|_q^q \,\frac{d\lambda}{2\lambda}
$$
and
$$
\|K\| \leq \pi \esssup_{\lambda>0} \|\sigma(\lambda)\| \,.
$$
\end{lemma}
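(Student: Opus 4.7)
The plan is to exploit the factorization $K = \mathcal{L} M_\sigma \mathcal{L}$ on $L^2(\R_+,\mathcal{K})$, where $\mathcal{L}$ is the Laplace transform \eqref{eq:laplace} acting componentwise in $\mathcal{K}$ and $M_\sigma$ denotes operator-valued multiplication by $\sigma(\lambda)$; this identity is a one-line application of Fubini's theorem. Equivalently, with $\phi_\lambda(t)=e^{-\lambda t}$, one writes $K=\int_0^\infty K_\lambda\,d\lambda$, where $K_\lambda f(t) = \phi_\lambda(t)\,\sigma(\lambda)\int_0^\infty\phi_\lambda(s)f(s)\,ds$. Thus $K_\lambda$ is rank one in its $L^2(\R_+)$-factor; since $\|\phi_\lambda\|^2_{L^2}=1/(2\lambda)$, its singular values are $(2\lambda)^{-1}s_n(\sigma(\lambda))$, and so $\|K_\lambda\|_q=\|\sigma(\lambda)\|_q/(2\lambda)$ for any $q$.

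The two endpoint cases of the lemma drop out immediately. The operator-norm bound follows from the factorization and \eqref{eq:8a}: $\|K\|\leq\|\mathcal{L}\|^2\,\|M_\sigma\|=\pi\,\esssup_{\lambda>0}\|\sigma(\lambda)\|$. The trace-class bound follows from the triangle inequality in $\mathbf{S}_1$: $\|K\|_1\leq\int_0^\infty\|K_\lambda\|_1\,d\lambda=\int_0^\infty\|\sigma(\lambda)\|_1\,d\lambda/(2\lambda)$, which is exactly the case $q=1$ of the claim.

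For the general exponent $1<q<\infty$ I would then interpolate. The linear map $T:\sigma\mapsto K$ sends $L^1(\R_+,d\lambda/(2\lambda);\mathbf{S}_1(\mathcal{K}))$ into $\mathbf{S}_1$ with norm $\leq 1$ and $L^\infty(\R_+;\mathbf{S}_\infty(\mathcal{K}))$ into $\mathbf{S}_\infty$ with norm $\leq\pi$. The noncommutative Riesz--Thorin theorem (applied to Bochner-space-to-Schatten-class maps) then yields $\|T\|_{L^q\to\mathbf{S}_q}\leq 1^{1/q}\pi^{1-1/q}=\pi^{(q-1)/q}$; raising this bound to the $q$-th power gives the stated estimate.

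The one step that is not purely algebraic is the invocation of the interpolation theorem between the vector-valued Lebesgue spaces $L^p(d\lambda/(2\lambda);\mathbf{S}_p(\mathcal{K}))$ and the Schatten classes $\mathbf{S}_p$. This is standard (see, for example, Pisier's treatment of noncommutative $L^p$-spaces), but it is the main technical ingredient of the argument and should be invoked with some care to ensure that the vector-valued Bochner structure on the domain and the Schatten structure on the target interpolate consistently.
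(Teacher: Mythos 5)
Your proof is correct; it uses the same factorization $K=\mathcal L\sigma\mathcal L$ and obtains the same endpoint bounds, but the interpolation step is genuinely different from the paper's. Where you invoke the vector-valued Riesz--Thorin theorem for the linear map $\sigma\mapsto K$ between the Bochner spaces $L^r(d\lambda/(2\lambda);\mathbf S_r(\mathcal K))$ and the Schatten classes $\mathbf S_r$, the paper sidesteps this machinery by constructing an explicit analytic family $K_z=\mathcal L\,u\,|\sigma|^{zq}\,\mathcal L$ (with $\sigma=u|\sigma|$ the pointwise polar decomposition) and applying Stein's interpolation theorem for analytic families directly, using the boundary estimates $\norm{K_z}\leq\pi$ on $\re z=0$ and $\norm{K_z}_1\leq\int_0^\infty\norm{\sigma(\lambda)}_q^q\,\frac{d\lambda}{2\lambda}$ on $\re z=1$. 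Since the family is built by hand, the paper never needs the identification of the complex interpolation space $\bigl(L^1(\mathbf S_1),L^\infty(\mathbf S_\infty)\bigr)_{[\theta]}$ with $L^q(\mathbf S_q)$ --- precisely the non-algebraic ingredient you correctly flag as requiring care --- and instead reduces to the three-lines lemma and elementary Schatten-class estimates. (For the $q=1$ endpoint the paper writes $K=(\mathcal L u|\sigma|^{1/2})(|\sigma|^{1/2}\mathcal L)$ and computes Hilbert--Schmidt norms, whereas you use the triangle inequality over the rank-one decomposition $K=\int_0^\infty K_\lambda\,d\lambda$; both give the same bound.) Your route is shorter if the Bochner-space interpolation theorem is taken as a black box; the paper's is more self-contained.
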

\begin{remark*}
In the scalar case $\mathcal K=\C$, 
this result has appeared in \cite{Widom} for $q=\infty$ and in \cite{Howland} for $q=1$. 
\end{remark*}

\begin{proof}
We can write $K=\mathcal L \sigma \mathcal L$, where $\mathcal L$ denotes, 
as in \eqref{eq:laplace}, the Laplace transform. Then, for $q=\infty$, we have
\begin{equation}
\label{eq:13a}
\|K\| 
\leq 
\|\mathcal L\|^2 \|\sigma\| 
= 
\pi \esssup\limits_{\lambda>0}\|\sigma(\lambda)\| \,,
\end{equation}
where we have used \eqref{eq:8a}. For $q=1$, we have
\begin{equation}
\label{eq:13b}
\|K\|_1 \leq \|\mathcal L |\sigma|^{1/2}\|_2^2 
= 
\int_0^\infty \| |\sigma(\lambda)|^{1/2} \|_2^2 \,\frac{d\lambda}{2\lambda} 
= 
\int_0^\infty \|\sigma(\lambda)\|_1 \,\frac{d\lambda}{2\lambda} \,.
\end{equation}
For $1<q<\infty$, the bound follows by complex interpolation. For the sake of completeness we include the details of this argument. For fixed $1<q<\infty$ we consider the analytic family of operators
$$
K_z = \mathcal L u |\sigma|^{zq} \mathcal L \,,
$$
where, for $\lambda>0$, $u(\lambda)$ is a partial isometry in $\mathcal K$ such that $\sigma(\lambda) = u(\lambda) |\sigma(\lambda)|$. The bounds \eqref{eq:13a} and \eqref{eq:13b} show that $K_z\in\mathbf B$ ($=$ the class of bounded operators) 
if $\re z=0$ with
$$
\|K_z\| \leq \pi \,,
$$
and that $K_z\in\mathbf S_1$ if $\re z=1$ with
$$
\|K_z\|_1 \leq \int_0^\infty \| |\sigma(\lambda)|^{zq} \|_1 \,\frac{d\lambda}{2\lambda} 
= 
\int_0^\infty \|\sigma(\lambda)\|_q^q \,\frac{d\lambda}{2\lambda} \,.
$$
Thus, by complex interpolation (see, e.g., \cite[Thm. 2.9]{S1}), $K_{1/q}\in\mathbf S_q$ with
$$
\|K_{1/q}\|_q 
\leq 
\pi^{1-1/q} \left( \int_0^\infty \|\sigma(\lambda)\|_q^q\,\frac{d\lambda}{2\lambda} \right)^{1/q} \,.
$$
Since $K_{1/q}=\mathcal L \sigma\mathcal L=K$, this proves the lemma.
\end{proof}


\section{The operators $\mathcal Z_\eps$ and $\mathcal Z^{(0)}_\eps$}\label{sec:5}
Let
$\mathcal Z_\eps, \mathcal Z^{(0)}_\eps: L^2(\R_+,\mathcal K)\to\mathcal H$ 
be the operators defined by \eqref{eq:22a}, \eqref{eq:22b}, 
and let $K_\eps={(\mathcal Z_\eps)}^* \mathcal Z_\eps$, 
$K_\eps^{(0)}={(\mathcal Z_\eps^{(0)})}^* \mathcal Z_\eps^{(0)}$. 
An inspection shows that $K_\eps$ and $K_\eps^{(0)}$ are Hankel operators in $L^2(\R_+,\mathcal K)$
with the kernels given by 
$$
k_\eps(t)
=
V_0G e^{-tH} \1_{(\eps,\delta)}(H)G^*V_0, 
\quad
k_\eps^{(0)}(t)
=
G e^{tH_0} \1_{(-\delta,-\eps)}(H)G^*,
\quad 
t>0.
$$
First we check that these operators 
are bounded and give an estimate for their norms as $\eps\to 0$:
\begin{lemma}\label{lem6}
We have
\begin{equation}
\label{eq:24}
\sup_{0<\eps\leq\delta} 
\left( \left\| \mathcal Z_\eps \right\| + \norm{ \mathcal Z^{(0)}_\eps} \right) <\infty 
\end{equation}
and 
$$
\norm{\left( \mathcal Z_\eps \right)^* \mathcal Z_\eps}_p^p 
+ 
\norm{\left( \mathcal Z_\eps^{(0)} \right)^* \mathcal Z_\eps^{(0)} }_p^p = O(|\ln\eps|)
\qquad\text{as}\ \eps\to 0 \,.
$$
\end{lemma}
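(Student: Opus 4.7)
The plan is to recognize $K_\eps$ and $K_\eps^{(0)}$ as operator-valued Hankel operators to which Lemma~\ref{lem3} applies directly, with densities provided by the derivatives $F'$ and $F_0'$ from Assumption~\ref{ass1}. First, since $F$ and $F_0$ are $\mathbf S_p$-differentiable on $[-\delta,\delta]$ (recall $\lambda_*=0$), the spectral theorem combined with the Bochner integration identity $F(\lambda_2)-F(\lambda_1)=\int_{\lambda_1}^{\lambda_2}F'(\lambda)\,d\lambda$ yields, by approximating $e^{-t\lambda}$ on $[\eps,\delta]$ by step functions,
\begin{align*}
k_\eps(t) &= V_0 G e^{-tH} \1_{(\eps,\delta)}(H) G^* V_0 = \int_\eps^\delta e^{-t\lambda} F'(\lambda)\,d\lambda, \\
k_\eps^{(0)}(t) &= G e^{tH_0} \1_{(-\delta,-\eps)}(H_0) G^* = \int_\eps^\delta e^{-t\mu} F_0'(-\mu)\,d\mu,
\end{align*}
where in the second line we have substituted $\mu=-\lambda$. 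Thus $K_\eps$ and $K_\eps^{(0)}$ are Hankel operators with densities $\sigma_\eps(\lambda)=\1_{(\eps,\delta)}(\lambda)F'(\lambda)$ and $\sigma_\eps^{(0)}(\lambda)=\1_{(\eps,\delta)}(\lambda)F_0'(-\lambda)$ in the sense of Lemma~\ref{lem3}.

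Next, I would apply Lemma~\ref{lem3} to both operators. The $\mathbf S_p$-bound gives
$$
\|K_\eps\|_p^p \leq \pi^{p-1} \int_\eps^\delta \|F'(\lambda)\|_p^p \,\frac{d\lambda}{2\lambda},
\qquad
\|K_\eps^{(0)}\|_p^p \leq \pi^{p-1} \int_\eps^\delta \|F_0'(-\lambda)\|_p^p \,\frac{d\lambda}{2\lambda}.
$$
The H\"older continuity at $\lambda_*=0$ built into Assumption~\ref{ass1} ensures that
$$
C := \sup_{|\lambda|\leq \delta}\bigl(\|F'(\lambda)\|_p + \|F_0'(\lambda)\|_p\bigr) < \infty,
$$
so each of the integrals above is bounded by $\tfrac12 C^p \ln(\delta/\eps) = O(|\ln\eps|)$, which is the second assertion of the lemma. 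For the uniform operator-norm bound, I would use $\|\mathcal Z_\eps\|^2 = \|(\mathcal Z_\eps)^*\mathcal Z_\eps\| = \|K_\eps\|$ (and analogously for $\mathcal Z_\eps^{(0)}$) together with the operator-norm part of Lemma~\ref{lem3}, giving
$$
\|\mathcal Z_\eps\|^2 + \|\mathcal Z_\eps^{(0)}\|^2 \leq \pi \sup_{|\lambda|\leq \delta}\bigl(\|F'(\lambda)\| + \|F_0'(\lambda)\|\bigr) < \infty,
$$
uniformly in $\eps\in(0,\delta]$.

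The only nontrivial step is the Laplace-transform representation of the kernels $k_\eps$ and $k_\eps^{(0)}$ as $\mathbf S_p$-valued integrals against $F'$, $F_0'$; but this is essentially a tautology once Assumption~\ref{ass1} identifies $F'(\lambda)\,d\lambda$ and $F_0'(\lambda)\,d\lambda$ as the absolutely continuous $\mathbf S_p$-valued densities of the spectral measures $V_0 G\, dE_H(\lambda) G^*V_0$ and $G\, dE_{H_0}(\lambda) G^*$ respectively. After that, Lemma~\ref{lem3} does all the work, and the logarithmic blow-up of the trace norm arises entirely from the factor $d\lambda/(2\lambda)$ integrated over $(\eps,\delta)$.
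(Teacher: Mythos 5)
Your proposal is correct and follows essentially the same route as the paper's own proof: both represent the kernels $k_\eps$, $k_\eps^{(0)}$ as Laplace transforms of $F'$ and $F_0'$ via the spectral theorem and Assumption~\ref{ass1}, then apply Lemma~\ref{lem3} with $q=\infty$ for the uniform norm bound and with $q=p$ for the $O(|\ln\eps|)$ trace-norm estimate, the logarithm coming from $\int_\eps^\delta d\lambda/\lambda$. The only cosmetic difference is that you spell out the identity $\|\mathcal Z_\eps\|^2=\|K_\eps\|$ explicitly, which the paper leaves implicit.
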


The bound \eqref{eq:24} is already contained in \cite{P1}, but we include a proof for the sake of completeness.

\begin{proof}
By the spectral theorem, we have
\begin{equation}
\label{eq:26}
k_\eps(t)
=
V_0G e^{-tH} \1_{(\eps,\delta)}(H)G^*V_0
= 
\int_{\eps}^{\delta} e^{-t\lambda} \,dF(\lambda) 
= 
\int_{\eps}^{\delta} e^{-t\lambda} F'(\lambda)\,d\lambda \,.
\end{equation}
Thus, the kernel $k_\eps$ is a Laplace transform of an operator valued measure
and so we can apply Lemma~\ref{lem3}. 
From Assumption~\ref{ass1} we know that
$$
\sup_{0\leq\lambda\leq\delta} \|F'(\lambda)\|
\leq
\sup_{0\leq\lambda\leq\delta} \|F'(\lambda)\|_p<\infty \,,
$$
and therefore, by Lemma \ref{lem3} with $q=\infty$, 
we get the uniform boundedness of $\left\| \mathcal Z_\eps \right\|$. 
Further, again by Lemma \ref{lem3} with $q=p$,
$$
\left\| \left( \mathcal Z_\eps \right)^* \mathcal Z_\eps \right\|_p^p 
\leq 
\pi^{p-1} \int_{\eps}^{\delta} \|F'(\lambda)\|_p^p \,\frac{d\lambda}{2\lambda} 
\leq 
C \int_{\eps}^{\delta} \frac{d\lambda}{\lambda} 
= 
O(|\ln\eps|)
\qquad\text{as}\ \eps\to 0 \,.
$$
This proves the lemma for $\mathcal Z_\eps$. The proof for $\mathcal Z_\eps^{(0)}$ is similar
and involves the representation 
\begin{equation}
k_\eps^{(0)}(t)
=
\int_{-\delta}^{-\eps}e^{t\lambda}F_0'(t)dt
=
\int_{\eps}^{\delta}e^{-t\lambda}F_0'(-t)dt.
\label{eq:26a}
\end{equation}
\end{proof}

\begin{lemma}\label{factorization}
The factorisation \eqref{eq:22} holds true:
$$
\1_{(\eps,\delta)}(H)\1_{(-\delta,-\eps)}(H_0)
=
\mathcal Z_\eps  \left(\mathcal Z_\eps^{(0)}\right)^*.
$$
\end{lemma}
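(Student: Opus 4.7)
The plan is to unfold the definitions, recognize the integrand as a total derivative, and evaluate the boundary terms.

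First, I will compute the adjoint: from \eqref{eq:22a},
$$
\bigl( \mathcal Z_\eps^{(0)} \bigr)^* u \,(t) = G \1_{(-\delta,-\eps)}(H_0) e^{tH_0} u, \qquad u\in\mathcal H,\ t>0,
$$
so that plugging into \eqref{eq:22b} yields
$$
\mathcal Z_\eps \bigl(\mathcal Z_\eps^{(0)}\bigr)^* u = \int_0^\infty e^{-tH}\1_{(\eps,\delta)}(H)\, G^*V_0 G\, \1_{(-\delta,-\eps)}(H_0) e^{tH_0} u\, dt.
$$
Since $V = G^* V_0 G$ and the spectral projections commute with functions of $H$ and $H_0$ respectively, we can rewrite the integrand as $e^{-tH}\1_{(\eps,\delta)}(H)(H-H_0)\1_{(-\delta,-\eps)}(H_0) e^{tH_0}$.

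The key observation is that, for any $u$ in the dense subspace where the computation is justified,
$$
\frac{d}{dt}\Bigl( e^{-tH}\1_{(\eps,\delta)}(H)\1_{(-\delta,-\eps)}(H_0) e^{tH_0} u\Bigr) = -\, e^{-tH}\1_{(\eps,\delta)}(H)(H-H_0)\1_{(-\delta,-\eps)}(H_0) e^{tH_0} u.
$$
Setting $A(t) := e^{-tH}\1_{(\eps,\delta)}(H)\1_{(-\delta,-\eps)}(H_0) e^{tH_0}$, the integral collapses to a telescoping difference
$$
\mathcal Z_\eps \bigl(\mathcal Z_\eps^{(0)}\bigr)^* = A(0) - \lim_{t\to\infty} A(t).
$$
Clearly $A(0) = \1_{(\eps,\delta)}(H)\1_{(-\delta,-\eps)}(H_0)$, which is the desired right-hand side.

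It remains to verify that $A(t)\to 0$. By the spectral theorem, $\|e^{-tH}\1_{(\eps,\delta)}(H)\| \leq e^{-t\eps}$ and $\|e^{tH_0}\1_{(-\delta,-\eps)}(H_0)\| \leq e^{-t\eps}$, giving $\|A(t)\|\leq e^{-2t\eps}$; this also justifies absolute convergence of the integral in the operator norm topology. The main (mild) obstacle is the bookkeeping for the strong derivative of the product $e^{-tH}\cdot e^{tH_0}$: it suffices to differentiate on vectors in $\dom(H)\cap\dom(H_0)$ (equivalently, the form domain applied to the bounded projections), on which the usual identity $\frac{d}{dt}(e^{-tH} e^{tH_0}) = -e^{-tH}(H-H_0) e^{tH_0}$ holds pointwise; the spectral cutoffs $\1_{(\eps,\delta)}(H)$ and $\1_{(-\delta,-\eps)}(H_0)$ commute with the respective semigroups and produce bounded operators, so the identity extends to all of $\mathcal H$ by continuity.
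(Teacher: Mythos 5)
Your proof is correct and takes essentially the same approach as the paper: both recognise $e^{-tH}\1_{(\eps,\delta)}(H)(H-H_0)\1_{(-\delta,-\eps)}(H_0)e^{tH_0}$ as the total derivative of $A(t)=e^{-tH}\1_{(\eps,\delta)}(H)\1_{(-\delta,-\eps)}(H_0)e^{tH_0}$ and integrate from $0$ to $\infty$, evaluating the boundary terms. The paper presents it in the reverse direction (starting from $L(t):=A(t)$ and writing $L(0)-L(+\infty)=-\int_0^\infty L'(t)\,dt$), but the computation is the same. One small remark: the ``dense subspace'' language at the end is not really needed; the spectral cutoffs $\1_{(\eps,\delta)}(H)$ and $\1_{(-\delta,-\eps)}(H_0)$ already localize to spectral subspaces on which $H$ and $H_0$ act boundedly, so the differentiability of $A(t)u$ holds outright for every $u\in\mathcal H$, with no need to pass to a core and extend by continuity.
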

\begin{proof}
This is a calculation from \cite{P1}, which we reproduce for completeness.
Let 
$$
L(t)=\1_{(\eps,\delta)}(H)e^{-tH}e^{tH_0}\1_{(-\delta,-\eps)}(H_0).
$$
Then we have
$L(0)=\1_{(\eps,\delta)}(H)\1_{(-\delta,-\eps)}(H_0)$,
$L(+\infty)=0$,
and
\begin{multline*}
L'(t)=-\1_{(\eps,\delta)}(H)e^{-tH}Ve^{tH_0}\1_{(-\delta,-\eps)}(H_0)
\\
=
-(V_0Ge^{-tH}\1_{(\eps,\delta)}(H))^*(Ge^{tH_0}\1_{(-\delta,-\eps)}(H_0)), \quad t>0.
\end{multline*}
Substituting this into 
$$
L(0)-L(+\infty)=-\int_0^\infty L'(t)dt,
$$
and recalling the definition of the operators 
$\mathcal Z_\eps$, $\mathcal Z_\eps^{(0)}$, we obtain the required identity.
\end{proof}

In the next lemma we shall determine the leading order behavior of the Hankel operators 
$\bigl( \mathcal Z_\eps^{(0)}\bigr)^* \mathcal Z_\eps^{(0)}$ and 
$\left( \mathcal Z_\eps \right)^* \mathcal Z_\eps$ 
in $L^2(\R_+,\mathcal K)$
as $\eps\to 0$. 
It turns out that these operators can be approximated in $\mathbf S_p$ norm by 
Hankel operators with the explicit kernels
\begin{equation}
\gamma_\eps(t)F_0'(0), \quad
\gamma_\eps(t)F'(0), \quad t>0,
\label{eq:25a}
\end{equation}
where $\gamma_\eps$ is the model kernel considered in Section~\ref{sec:2}, 
$$
\gamma_\eps(t)
=
\int_\eps^\delta e^{-t\lambda}\,d\lambda.
$$
Identifying $L^2(\R_+,\mathcal K)$ with $L^2(\R_+)\otimes\mathcal K$, 
we shall denote the Hankel operators with the kernels \eqref{eq:25a} by 
$$
\Gamma_\eps \otimes F_0'(0), \quad
\Gamma_\eps \otimes F'(0).
$$
\begin{lemma}\label{lem7}
We have
\begin{align}
\norm{\left( \mathcal Z_\eps^{(0)} \right)^* \mathcal Z_\eps^{(0)} - \Gamma_\eps \otimes F_0'(0)}_p & = O(1)
\qquad\text{as}\ \eps\to 0, 
\label{eq:25b}
\\
\left\| \left( \mathcal Z_\eps \right)^* \mathcal Z_\eps - \Gamma_\eps \otimes F'(0)\right\|_p & = O(1)
\qquad\text{as}\ \eps\to 0.
\label{eq:25c}
\end{align}
\end{lemma}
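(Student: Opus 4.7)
The plan is to recognize that both differences are themselves operator-valued Hankel operators of the type covered by Lemma~\ref{lem3}, and then to bound them by exploiting the H\"older continuity of $F'$ and $F_0'$ at $\lambda_*=0$ (guaranteed by Assumption~\ref{ass1} or, more precisely, the version stated in the Remark following it).

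First I would write out the kernel of the difference. Starting from \eqref{eq:26}, the operator $(\mathcal Z_\eps)^*\mathcal Z_\eps$ is the integral Hankel operator in $L^2(\R_+,\mathcal K)$ with kernel
\[
k_\eps(s+t)=\int_\eps^\delta e^{-(s+t)\lambda}F'(\lambda)\,d\lambda.
\]
Identifying $L^2(\R_+,\mathcal K)$ with $L^2(\R_+)\otimes\mathcal K$, the operator $\Gamma_\eps\otimes F'(0)$ is the Hankel operator with kernel $\gamma_\eps(s+t)F'(0)=\int_\eps^\delta e^{-(s+t)\lambda}F'(0)\,d\lambda$. Hence their difference is the Hankel operator with kernel equal to the Laplace transform of
\[
\sigma(\lambda):=\mathbbm1_{(\eps,\delta)}(\lambda)\bigl(F'(\lambda)-F'(0)\bigr).
\]
Applying Lemma~\ref{lem3} with $q=p$ gives
\[
\bigl\|(\mathcal Z_\eps)^*\mathcal Z_\eps-\Gamma_\eps\otimes F'(0)\bigr\|_p^p
\leq \pi^{p-1}\int_\eps^\delta \bigl\|F'(\lambda)-F'(0)\bigr\|_p^p\,\frac{d\lambda}{2\lambda}.
\]

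Second, I would invoke the H\"older bound $\|F'(\lambda)-F'(0)\|_p\leq C\lambda^\varkappa$ that holds on $[0,\delta]$ under Assumption~\ref{ass1} (with $\lambda_*=0$). Substituting this into the above bound gives
\[
\bigl\|(\mathcal Z_\eps)^*\mathcal Z_\eps-\Gamma_\eps\otimes F'(0)\bigr\|_p^p
\leq C^p\pi^{p-1}\int_\eps^\delta \lambda^{p\varkappa-1}\,\frac{d\lambda}{2}
\leq \frac{C^p\pi^{p-1}}{2p\varkappa}\,\delta^{p\varkappa}=O(1),
\]
which establishes \eqref{eq:25c}. The crucial point is that the factor $1/\lambda$ produced by Lemma~\ref{lem3} is exactly compensated by the H\"older vanishing of the integrand at $\lambda=0$, so the integral converges as $\eps\to0$.

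The argument for \eqref{eq:25b} is identical. Using \eqref{eq:26a}, the kernel of the difference $(\mathcal Z_\eps^{(0)})^*\mathcal Z_\eps^{(0)}-\Gamma_\eps\otimes F_0'(0)$ is the Laplace transform of $\mathbbm1_{(\eps,\delta)}(\lambda)(F_0'(-\lambda)-F_0'(0))$, and the one-sided H\"older continuity of $F_0'$ at $0$ (again from Assumption~\ref{ass1}) gives the same $O(1)$ bound via Lemma~\ref{lem3}. There is no real obstacle here: the whole lemma is essentially a direct marriage of the Hankel $\mathbf S_p$-bound in Lemma~\ref{lem3} with the regularity hypothesis on $F'$ and $F_0'$; the only thing one has to check carefully is that the powers of $\lambda$ match up so that $\lambda^{p\varkappa-1}$ is integrable at the origin, which it is for any $\varkappa>0$.
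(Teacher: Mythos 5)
Your proof is correct and follows the paper's argument exactly: identify the difference as a Hankel operator whose kernel is the Laplace transform of $\1_{(\eps,\delta)}(\lambda)(F'(\lambda)-F'(0))$ (resp.\ the mirrored kernel for $F_0'$), apply Lemma~\ref{lem3} with $q=p$, and use the H\"older bound from Assumption~\ref{ass1} to make the resulting $d\lambda/\lambda$ integral converge uniformly in $\eps$. The only cosmetic difference is that you carry out the final integral computation explicitly, whereas the paper simply asserts that the right side of \eqref{eq:25d} is bounded.
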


\begin{proof}
Let us first prove \eqref{eq:25c}. 
Recalling formula \eqref{eq:26} for $k_\eps$, we see that the Hankel operator
$(\mathcal Z_\eps)^* \mathcal Z_\eps - \Gamma_\eps \otimes F'(0)$ 
has the kernel
$$
k_\eps(t)-\gamma_\eps(t) F'(0)
=
\int_\eps^\delta
e^{-t\lambda}(F'(\lambda)-F'(0))d\lambda.
$$
Applying Lemma \ref{lem3} with $q=p$, we get
\begin{equation}
\left\| \left( \mathcal Z_\eps \right)^* \mathcal Z_\eps - \Gamma_\eps \otimes F'(0)\right\|_p^p 
\leq 
\pi^{p-1} \int_\eps^\delta \|F'(\lambda) - F'(0)\|_p^p \,\frac{d\lambda}{2\lambda}.
\label{eq:25d}
\end{equation}
By Assumption~\ref{ass1}, we have $\|F'(\lambda) - F'(0)\|_p=O(\lambda^\varkappa)$, $\lambda\to0$,  
with some $\varkappa>0$. 
It follows that the right side in \eqref{eq:25d} is bounded uniformly in $\eps>0$. 
This proves \eqref{eq:25c}. 
The argument for \eqref{eq:25b}  is similar and involves the representation \eqref{eq:26a} for $k_\eps^{(0)}$. 
\end{proof}

\noindent\textit{Remark.}
Note that Lemma \ref{lem7} together with Lemma \ref{lem2} implies all the assertions in Lemma \ref{lem6}. We have chosen to prove Lemma \ref{lem6} separately for pedagogic reasons, since it does not rely on the machinery to prove Lemma \ref{lem2}.

\section{Spectral localization}\label{sec:4}

Let $\Pi^{(1)}_\eps$ and $\wt \Pi^{(1)}_\eps$ be the operators 
defined by \eqref{eq:16a}, \eqref{eq:16}. In this section, we prove

\begin{lemma}\label{lem4}
Let Assumptions~\ref{ass0} and \ref{ass1} hold true. Then for all $q\geq p$ one has
\begin{align}
\norm{\wt \Pi^{(1)}_\eps - \Pi^{(1)}_\eps}_{q}^q &= O(\abs{\ln\eps}^{1/2}) 
\qquad\text{as}\ \eps\to 0 \,.
\label{eq:17aa}
\end{align}
\end{lemma}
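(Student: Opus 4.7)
\textit{Sketch of the proof.} The plan is to expand $\Pi^{(1)}_\eps - \wt\Pi^{(1)}_\eps = ABA - \wt A \wt B \wt A$, where $A := \1_{(-\infty,-\eps)}(H_0)$ and $B := \1_{(\eps,\infty)}(H)$, by writing $A = \wt A + \pi_0$ and $B = \wt B + \pi$ with $\pi_0 := \1_{(-\infty,-\delta]}(H_0)$ and $\pi := \1_{[\delta,\infty)}(H)$. This produces seven ``cross terms'', each containing at least one factor of $\pi_0$ or $\pi$. For every cross term I will exhibit a factorization into a product of two operators that both lie in $\mathbf S_{2p}$, obtained by inserting $\pi^2 = \pi$ or $\wt B^2 = \wt B$ where needed: for instance $\wt A \wt B \pi_0 = (\wt A\wt B)(\wt B \pi_0)$, $\pi_0 \wt B \wt A = (\pi_0 \wt B)(\wt B \wt A)$, $\wt A \pi \wt A = (\wt A \pi)(\pi \wt A)$, $\pi_0 \pi \pi_0 = (\pi_0 \pi)(\pi \pi_0)$, and so on.

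The key ingredient is that the ``gap factors'' $\pi_0 \wt B$, $\wt A \pi$, $\pi_0 \pi$ and their adjoints all lie in $\mathbf S_{2p}$ with norms bounded uniformly in $\eps \in (0,\delta)$. Indeed, since $\wt A \leq \1_{(-\infty,0)}(H_0)$ and $\wt B \leq \1_{(0,\infty)}(H)$, each such factor can be written as a product of the $\eps$-dependent projection with the \emph{fixed} $\mathbf S_{2p}$-operator $\1_{(-\infty,0)}(H_0)\1_{[\delta,\infty)}(H)$ (or $\1_{(-\infty,-\delta]}(H_0)\1_{(0,\infty)}(H)$, or $\1_{(-\infty,-\delta]}(H_0)\1_{[\delta,\infty)}(H)$), whose $\mathbf S_{2p}$-boundedness is granted by Lemma~\ref{lem5}. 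The remaining factors $\wt A\wt B$ and $\wt B\wt A$ have no such spectral gap and their $\mathbf S_{2p}$ norms will grow with $\eps$; however, the trace identity
\begin{equation*}
\|\wt A\wt B\|_{2p}^{2p} = \tr\bigl((\wt B\wt A\wt B)^p\bigr) = \tr\bigl((\wt\Pi^{(1)}_\eps)^p\bigr) = \|\wt\Pi^{(1)}_\eps\|_p^p,
\end{equation*}
combined with the factorization $\wt\Pi^{(1)}_\eps = \mathcal Z^{(0)}_\eps K_\eps (\mathcal Z^{(0)}_\eps)^*$ from Lemma~\ref{factorization} and the bounds $\|\mathcal Z^{(0)}_\eps\| = O(1)$, $\|K_\eps\|_p^p = O(|\ln\eps|)$ from Lemma~\ref{lem6}, yields $\|\wt A\wt B\|_{2p} = O(|\ln\eps|^{1/(2p)})$.

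By H\"older in Schatten classes, each of the seven cross terms therefore belongs to $\mathbf S_p$: five of them are products of two uniformly bounded gap factors and hence $O(1)$, while the two containing $\wt A\wt B$ or $\wt B\wt A$ are $O(|\ln\eps|^{1/(2p)})$. Summing and raising to the $p$-th power gives $\|\Pi^{(1)}_\eps - \wt\Pi^{(1)}_\eps\|_p^p = O(|\ln\eps|^{1/2})$. For general $q \geq p$ I will then use the elementary Schatten bound $\|X\|_q^q \leq \|X\|^{q-p}\|X\|_p^p$ together with the trivial operator-norm estimate $\|\Pi^{(1)}_\eps - \wt\Pi^{(1)}_\eps\| \leq 2$ to conclude. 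The main obstacle I anticipate is precisely in controlling the two ``bad'' cross terms: a naive factorization of them yields only an $\mathbf S_{2p}$-bound $O(1)$, which is insufficient for $q$ in the range $p \leq q < 2p$, and the escape from this difficulty is the Hankel-type estimate for $\wt\Pi^{(1)}_\eps$ built in Section~\ref{sec:5}, which quantifies precisely the growth of the non-gap factor $\wt A\wt B$.
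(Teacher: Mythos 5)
Your proposal is correct and takes essentially the same approach as the paper: the paper writes $\Pi^{(1)}_\eps - \wt\Pi^{(1)}_\eps = (P_\eps^*-\wt P_\eps^*)P_\eps + \wt P_\eps^*(P_\eps-\wt P_\eps)$ with $P_\eps=\1_{(\eps,\infty)}(H)\1_{(-\infty,-\eps)}(H_0)$ and $\wt P_\eps = \wt B\wt A$, then combines the $\mathbf S_{2p}$ bounds $\|P_\eps-\wt P_\eps\|_{2p}=O(1)$ (from Lemma~\ref{lem5}) and $\|\wt P_\eps\|_{2p}=O(|\ln\eps|^{1/2p})$ (from the Hankel factorization and Lemma~\ref{lem6}) — which is precisely the same information your seven-term expansion uses, just grouped more compactly. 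Your route to $\|\wt A\wt B\|_{2p}=O(|\ln\eps|^{1/2p})$ via the identity $\|\wt A\wt B\|_{2p}^{2p}=\|\wt\Pi^{(1)}_\eps\|_p^p$ and the bound $\|\wt\Pi^{(1)}_\eps\|_p\le\|\mathcal Z^{(0)}_\eps\|^2\|K_\eps\|_p$ is a cosmetic variant of the paper's chain $\|\wt P_\eps\|_{2p}^{2p}\le\|\wt P_\eps\|_p^p\le\|\mathcal Z_\eps\|_{2p}^p\|\mathcal Z^{(0)}_\eps\|_{2p}^p$, and the final passage from $q=p$ to $q>p$ via $\|X\|\le2$ is likewise identical.
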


\begin{proof}
Setting 
$$
P_\eps=\1_{(\eps,\infty)}(H)\1_{(-\infty,-\eps)}(H_0),
\quad
\wt P_\eps=\1_{(\eps,\delta)}(H)\1_{(-\delta,-\eps)}(H_0),
$$
we can write
$$
\Pi_\eps^{(1)}=P_\eps^* P_\eps, 
\quad
\wt \Pi_\eps^{(1)}=\wt P_\eps^* \wt P_\eps.
$$
First let us estimate the difference $P_\eps-\wt P_\eps$. 
We have
\begin{multline}
P_\eps-\wt P_\eps
=
\bigl(\1_{(\eps,\infty)}(H)\1_{(-\infty,-\eps)}(H_0)
-
\1_{(\eps,\delta)}(H)\1_{(-\infty,-\eps)}(H_0)\bigr)
\\
+
\bigl(\1_{(\eps,\delta)}(H)\1_{(-\infty,-\eps)}(H_0)
-
\1_{(\eps,\delta)}(H)\1_{(-\delta,-\eps)}(H_0)\bigr)
\\
=
\1_{[\delta,\infty)}(H)\1_{(-\infty,-\eps)}(H_0)
-
\1_{(\eps,\delta)}(H)\1_{(-\infty,-\delta]}(H_0).
\label{eq:19}
\end{multline}
Using Lemma~\ref{lem5}, we can estimate separately 
each of the two terms in the 
right side of \eqref{eq:19}:
\begin{gather*}
\norm{\1_{[\delta,\infty)}(H)\1_{(-\infty,-\eps)}(H_0)}_{2p}
\leq
\norm{\1_{[\delta,\infty)}(H)\1_{(-\infty,0)}(H_0)}_{2p}
<\infty,
\\
\norm{\1_{(\eps,\delta)}(H)\1_{(-\infty,-\delta]}(H_0)}_{2p}
\leq
\norm{\1_{(0,\delta)}(H)\1_{(-\infty,-\delta]}(H_0)}_{2p}
<\infty.
\end{gather*}
It follows that 
\begin{equation}
\norm{P_\eps-\wt P_\eps}_{2p}=O(1) \qquad\text{as}\ \eps\to0.
\label{eq:21}
\end{equation}
Next, let us estimate $\wt P_\eps$. 
Since $\norm{\wt P_\eps}\leq1$, we have
\begin{equation}
\norm{\wt P_\eps}_{2p}^{2p}
=
\norm{\abs{\wt P_\eps}^{2p}}_1
\leq
\norm{\abs{\wt P_\eps}^{p}}_1
=
\norm{\wt P_\eps}_{p}^{p}.
\label{eq:21b}
\end{equation}
Recall that by \eqref{eq:22}, we have
$$
\wt P_\eps
=
\mathcal Z_\eps  (\mathcal Z_\eps^{(0)})^*.
$$
Thus, by Lemma~\ref{lem6}, 
$$
\norm{\wt P_\eps}_{p}^{p}
=
\norm{\mathcal Z_\eps  (\mathcal Z_\eps^{(0)})^*}_p^p
\leq
\norm{\mathcal Z_\eps}_{2p}^p
\norm{\mathcal Z_\eps^{(0)}}_{2p}^p
=
O(\abs{\ln \eps}) 
\qquad\text{as}\ \eps \to 0\,.
$$
Combining these formulas, we obtain
\begin{equation}
\norm{\wt P_\eps}_{2p}
=
O(\abs{\ln \eps}^{1/2p}) \qquad\text{as}\ \eps\to0.
\label{eq:20}
\end{equation}
Combining \eqref{eq:21} and \eqref{eq:20}, we also obtain
\begin{equation}
\norm{P_\eps}_{2p}
=
O(\abs{\ln \eps}^{1/2p}) \qquad\text{as}\ \eps\to0\,.
\label{eq:21a}
\end{equation}
Let us prove \eqref{eq:17aa} for $q=p$. We have
$$
\Pi^{(1)}_\eps - \wt \Pi^{(1)}_\eps 
=
P_\eps^*P_\eps
-
\wt P_\eps^*\wt P_\eps
=
(P_\eps^*-\wt P_\eps^*)P_\eps
+
\wt P_\eps^*(P_\eps-\wt P_\eps)\,,
$$
and therefore, by \eqref{eq:21}, \eqref{eq:20}, \eqref{eq:21a}, 
$$
\norm{\Pi^{(1)}_\eps - \wt \Pi^{(1)}_\eps}_p
\leq
\norm{P_\eps^*-\wt P_\eps^*}_{2p}\norm{P_\eps}_{2p}
+
\norm{\wt P_\eps^*}_{2p}\norm{P_\eps-\wt P_\eps}_{2p}
=
O(\abs{\log \eps}^{1/2p})\,, 
$$
as required. 
In order to derive \eqref{eq:17aa} for $q>p$, we use the fact that
$\norm{\wt \Pi^{(1)}_\eps - \Pi^{(1)}_\eps}\leq 2$ and 
argue as in  
\eqref{eq:21b}:
$$
\norm{\wt \Pi^{(1)}_\eps - \Pi^{(1)}_\eps}_{q}^q
=
\norm{\abs{\wt \Pi^{(1)}_\eps - \Pi^{(1)}_\eps}^q}_1
\leq
2^{q-p}
\norm{\abs{\wt \Pi^{(1)}_\eps - \Pi^{(1)}_\eps}^p}_{1}
=
2^{q-p}
\norm{\wt \Pi^{(1)}_\eps - \Pi^{(1)}_\eps}_{p}^p \,.
$$
\end{proof}

\begin{remark}
Note that \eqref{eq:21} immediately implies
$$
\norm{\Pi^{(1)}_\eps - \wt \Pi^{(1)}_\eps}_{2p}=O(1),
$$
which suffices for the proof of Theorem~\ref{thm1} for $f(t)=t^n$ with $n\geq 2p$. 
\end{remark}

\section{Putting it all together}\label{sec:6}

\subsection{Auxiliary statements}
First, we need to relate the scattering matrix to the operators $F_0'$, $F'$. Note that $F_0(\lambda)$ and $F(\lambda)$ are non-decreasing with respect to $\lambda$ and so $F_0'(\lambda)$ and $F'(\lambda)$ are non-negative. In particular, $F_0'(\lambda)^{1/2}$ and $F'(\lambda)^{1/2}$ are well-defined.

Below we frequently use the notation $\approx$ introduced in Section \ref{sec:1.3}.

\begin{lemma}\label{lem9}
Let Assumptions~\ref{ass0}, \ref{ass1} hold true; then 
\begin{equation}
\frac14 |S(0)-I|^2 \approx  \pi^2 F'(0)^{1/2}  F_0'(0) F'(0)^{1/2}
\label{eq:35}
\end{equation}
and 
\begin{equation}
\label{eq:37}
\sum_{\ell=1}^L
a_\ell^{p}<\infty.
\end{equation}
\end{lemma}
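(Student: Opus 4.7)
My plan is to prove \eqref{eq:35} by bringing in the stationary representation of the scattering matrix, and then to deduce \eqref{eq:37} as a Schatten class consequence of \eqref{eq:35}.

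The starting point is the abstract stationary formula for the on-shell scattering matrix. Under the factorization $V=G^*V_0G$ and the local limiting absorption principle (guaranteed by Assumption~\ref{ass1}; see \cite[Chapter~5]{Ya1}), one has
$$
S(0)-I = -2\pi i\, A_0 M A_0^*, \qquad M := V_0 - T(0+i0),
$$
where $A_0 : \mathcal{K} \to \mathfrak{h}(0)$ is the trace operator onto the energy shell of $H_0$ at $\lambda=0$, characterized by $F_0'(0) = A_0^* A_0$. Moreover, taking imaginary parts in the resolvent identity $T(z) = V_0 - (I + V_0 T_0(z))^{-1}V_0$ and passing to the limit $z\to 0+i0$ --- exactly as in the computation of $F'(\lambda)$ performed in the proof preceding this section --- yields the intertwining relation
$$
F'(0) = M F_0'(0) M^*.
$$

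From these two ingredients, $\tfrac14\abs{S(0)-I}^2 = \pi^2 A_0 M^* F_0'(0) M A_0^*$, and I would apply $C^*C \approx CC^*$ twice. First, with $C=F_0'(0)^{1/2} M A_0^*$, I obtain
$$
A_0 M^* F_0'(0) M A_0^* \approx F_0'(0)^{1/2} M F_0'(0) M^* F_0'(0)^{1/2} = F_0'(0)^{1/2} F'(0) F_0'(0)^{1/2}.
$$
Second, with $C = F_0'(0)^{1/2} F'(0)^{1/2}$, I swap the roles of $F_0'(0)$ and $F'(0)$ to get
$$
F_0'(0)^{1/2} F'(0) F_0'(0)^{1/2} \approx F'(0)^{1/2} F_0'(0) F'(0)^{1/2}.
$$
Transitivity of $\approx$ then delivers \eqref{eq:35}.

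For \eqref{eq:37}, Assumption~\ref{ass1} places $F_0'(0), F'(0) \in \mathbf S_p$, so their square roots lie in $\mathbf S_{2p}$. H\"older's inequality in Schatten classes then gives $F'(0)^{1/2} F_0'(0) F'(0)^{1/2} \in \mathbf S_{p/2}$, and by \eqref{eq:35} the nonzero eigenvalues of this operator coincide with those of $\tfrac{1}{4\pi^2}\abs{S(0)-I}^2$, namely $(a_\ell^2(0)/\pi^2)_\ell$. Consequently $(a_\ell^2(0))_\ell \in \ell^{p/2}$, which is precisely $\sum_\ell a_\ell^p(0) < \infty$. The only genuinely delicate point is invoking the stationary S-matrix formula in exactly this operator-theoretic form (with $A_0$ and $M$), rather than in its more familiar form involving $V$ directly; once this is in hand, everything reduces to routine $\approx$-manipulations and a single H\"older estimate.
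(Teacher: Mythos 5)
Your proof is correct and rests on the same two pillars as the paper's: the stationary representation of the scattering matrix and the link between $F'$, $F_0'$ and the boundary values $T(+i0)$, $T_0(+i0)$. The variation is where you put the work. The paper first observes that unitarity of $S(0)$ gives the identity $\tfrac14\abs{S(0)-I}^2 = \tfrac12\re(I-S(0))$, then reads the right-hand side directly off the stationary formula $\wt S(0) = I - 2\pi i\, F_0'(0)^{1/2}(V_0-T(+i0))F_0'(0)^{1/2}$ using $F'(\lambda) = \pi^{-1}\im T(\lambda+i0)$, which lands immediately on $\pi^2 F_0'(0)^{1/2} F'(0) F_0'(0)^{1/2}$ after a single $C^*C\approx CC^*$ swap. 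You instead expand $\abs{S(0)-I}^2$ brute-force from the stationary formula and then invoke the intertwining relation $F'(0) = M F_0'(0) M^*$ (with $M = V_0 - T(+i0)$) to resolve the resulting $A_0 M^* F_0'(0) M A_0^*$, which costs you two $\approx$ swaps. The intertwining relation is itself derived in the paper --- but only as part of the Schr\"odinger-operator verification of Assumption~\ref{ass1}, not inside the proof of this lemma --- and it holds in the abstract setting for the same reason; your invocation of it is sound but should be cited to the limiting absorption principle under Assumption~\ref{ass1} rather than to a ``preceding proof'' you are guessing at. The second half of your argument (placing $F'(0)^{1/2} F_0'(0) F'(0)^{1/2}\in\mathbf S_{p/2}$ via H\"older and identifying its nonzero eigenvalues with $a_\ell^2/\pi^2$) matches the paper exactly. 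Net assessment: the paper's unitarity shortcut is marginally slicker, but yours is a legitimate and complete alternative route through the same material.
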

\begin{proof}

This is essentially a known statement (see e.g. \cite[Lemma 4]{P1} or \cite[Corollary 4.31]{GKMO}). 
For completeness, we briefly recall the proof. 
First note that by unitarity of $S(0)$ we have
$$
\frac14 |S(0)-I|^2 
=
\frac14 (S(0)^*-I)(S(0)-I)
=
\frac12\re(I-S(0)).
$$
Next, by the stationary representation for the scattering matrix 
(see e.g. \cite[Theorem 5.5.4]{Ya1}), 
the operator $S(0)$ is unitarily equivalent to the operator
(recall that $T(z)$ is defined in \eqref{eq:1a}) 
$$
\wt S(0)=I-2\pi i F_0'(0)^{1/2}(V_0-T(+i0))F_0'(0)^{1/2} \quad 
\text{ in $\mathcal K$.}
$$
It follows that 
\begin{multline*}
\frac12 \re (I-S(0))
\approx
\frac12 \re (I-\wt S(0))
\\
=
\pi \im ( F_0'(0)^{1/2}T(+i0)F_0'(0)^{1/2})
=
\pi^2 F_0'(0)^{1/2}  F'(0) F_0'(0)^{1/2},
\end{multline*}
where we have used \eqref{eq:6c} at the last step.
Denoting $X=F'(0)^{1/2} F_0'(0)^{1/2}$, 
the last operator can be transformed as
$$
\pi^2 F_0'(0)^{1/2}  F'(0) F_0'(0)^{1/2}
=
\pi^2
X^*X
\\
\approx
\pi^2
XX^*
=
\pi^2 F'(0)^{1/2}  F_0'(0) F'(0)^{1/2},
$$
which yields \eqref{eq:35}.
By Assumption~\ref{ass1}, the operator in the right side of \eqref{eq:35} is 
in the class $\mathbf S_{p/2}$.
The relation \eqref{eq:35} implies that the non-zero $a_\ell^2$ coincide with the non-zero eigenvalues of this 
operator; thus, we obtain \eqref{eq:37}.
\end{proof}

\begin{lemma}\label{lem8a}
Let $X_\eps$, $Y_\eps$ be non-negative, compact operators depending on $\eps>0$. 
Assume that for some $q\geq 1$, we have
\begin{equation}
\norm{X_\eps}_q^q=O(\abs{\ln\eps})\,, \qquad
\norm{X_\eps-Y_\eps}_q^q=o(\abs{\ln\eps}) \qquad\text{as}\ \eps\to0.
\label{eq:30b}
\end{equation}
Then 
$$
\Tr Y_\eps^q - \Tr X_\eps^q=o(\abs{\ln \eps}), 
\quad \eps\to0.
$$
\end{lemma}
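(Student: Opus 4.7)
The plan is to observe that the hypotheses reduce the problem to a scalar estimate about the Schatten norms. Since $X_\eps\geq 0$ and $X_\eps\in \mathbf S_q$, the spectral theorem gives $\Tr X_\eps^q = \sum_n \lambda_n(X_\eps)^q = \norm{X_\eps}_q^q$, and similarly $\Tr Y_\eps^q = \norm{Y_\eps}_q^q$ (note that $Y_\eps \in \mathbf S_q$ follows immediately from $Y_\eps = X_\eps - (X_\eps - Y_\eps)$ together with the $\mathbf S_q$-triangle inequality). Thus the problem becomes showing that
$$
\bigl| \norm{Y_\eps}_q^q - \norm{X_\eps}_q^q \bigr| = o(|\ln\eps|).
$$

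Next, I would extract two scalar consequences of \eqref{eq:30b}. The triangle inequality in $\mathbf S_q$ yields
$$
\bigl| \norm{Y_\eps}_q - \norm{X_\eps}_q \bigr| \leq \norm{X_\eps - Y_\eps}_q = o(|\ln\eps|^{1/q}),
$$
and, combined with $\norm{X_\eps}_q = O(|\ln\eps|^{1/q})$, this gives also $\norm{Y_\eps}_q = O(|\ln\eps|^{1/q})$.

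Finally, I would apply the elementary mean-value inequality $|a^q - b^q| \leq q \max(a,b)^{q-1} |a-b|$ for $a,b\geq 0$ and $q\geq 1$ (immediate from the monotonicity of $t \mapsto qt^{q-1}$ on $[0,\infty)$) with $a=\norm{X_\eps}_q$ and $b=\norm{Y_\eps}_q$. This gives
$$
\bigl| \norm{X_\eps}_q^q - \norm{Y_\eps}_q^q \bigr| \leq q \max\bigl(\norm{X_\eps}_q,\norm{Y_\eps}_q\bigr)^{q-1} \norm{X_\eps-Y_\eps}_q = O(|\ln\eps|^{(q-1)/q}) \cdot o(|\ln\eps|^{1/q}) = o(|\ln\eps|),
$$
which is the desired conclusion.

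There is no real obstacle here: the whole argument rests on the identification $\Tr A^q = \norm{A}_q^q$ for $A\geq 0$ in $\mathbf S_q$, and on a scalar inequality. The only point requiring a small amount of attention is that one must use $\norm{X_\eps - Y_\eps}_q$ (rather than $\norm{X_\eps^q - Y_\eps^q}_1$, which would need a genuine operator-Lipschitz bound) --- and this is achieved by working at the level of the Schatten norm before taking the $q$-th power, so that the nonlinear inequality $|a^q-b^q| \leq q\max(a,b)^{q-1}|a-b|$ is only ever applied to scalars.
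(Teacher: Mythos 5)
Your proof is correct and follows essentially the same route as the paper's: both reduce to the identity $\Tr A^q = \norm{A}_q^q$ for $A\geq 0$, then apply the scalar inequality $|a^q-b^q|\leq q\max(a,b)^{q-1}|a-b|$ with $a=\norm{X_\eps}_q$, $b=\norm{Y_\eps}_q$ together with the reverse triangle inequality in $\mathbf S_q$. You spell out the intermediate bound $\norm{Y_\eps}_q=O(|\ln\eps|^{1/q})$ slightly more explicitly than the paper, but this is the same argument.
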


Of course, we choose the function $\abs{\ln\eps}$ here simply because
this is what comes up in our proof in the next subsection. 

\begin{proof}
We note that for $X\geq0$, we have $\Tr X^q=\norm{X}_q^q$. 
Now the statement of the lemma follows directly from the estimate
$$
\abs{\norm{Y}_q^q-\norm{X}_q^q}
\leq
q\max\{\norm{Y}_q^{q-1}, \norm{X}_q^{q-1}\}\norm{Y-X}_q.
$$
To prove the latter estimate, it suffices to use the elementary inequality
$$
\abs{b^q-a^q}
\leq 
q\max\{b^{q-1}, a^{q-1}\}\abs{b-a},\quad
a\geq0,\quad b\geq0\, ,
$$
with $a=\norm{X}_q$, $b=\norm{Y}_q$ and the inverse triangle inequality
$\abs{\norm{Y}_q-\norm{X}_q}\leq\norm{Y-X}_q$. 
\end{proof}

\subsection{The case $f(t)=t^q$}

\begin{lemma}\label{lem8}
For any $q\geq p$, Theorem~\ref{thm1} holds true with $f(t)=t^q$. That is, 
\begin{equation}
\label{eq:30}
\lim_{\eps\to +0} |\ln\eps|^{-1} \tr \left(\Pi^{(1)}_\eps\right)^q 
= 
\frac{1}{2\pi} \sum_{\ell=1}^L a_\ell^{2q} 
\int_{-\infty}^\infty \frac{dx}{\cosh^{2q}(\pi x)}.
\end{equation}
\end{lemma}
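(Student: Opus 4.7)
The plan is to approximate $\Pi^{(1)}_\eps$ in $\mathbf S_q$ norm by a model operator with an explicit tensor-product structure, and then evaluate the trace of the model by combining Lemma~\ref{lem2} and Lemma~\ref{lem9}. The first reduction is the spectral localization Lemma~\ref{lem4}, which gives $\|\Pi^{(1)}_\eps - \wt\Pi^{(1)}_\eps\|_q^q = O(|\ln\eps|^{1/2})$; together with the uniform bound $\|\wt\Pi^{(1)}_\eps\|_q^q = O(|\ln\eps|)$ (obtained from the factorization $\wt\Pi^{(1)}_\eps = \mathcal Z_\eps^{(0)}(\mathcal Z_\eps)^*\mathcal Z_\eps(\mathcal Z_\eps^{(0)})^*$ and Lemma~\ref{lem6}), Lemma~\ref{lem8a} yields
$$
\tr(\Pi^{(1)}_\eps)^q = \tr(\wt\Pi^{(1)}_\eps)^q + o(|\ln\eps|).
$$

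Next, two applications of the $\approx$ relation ($C^*C \approx CC^*$, first with $C = \mathcal Z_\eps(\mathcal Z_\eps^{(0)})^*$ and then with $C = \mathcal Z_\eps (K_\eps^{(0)})^{1/2}$) convert $\wt\Pi^{(1)}_\eps$ into the positive self-adjoint operator
$$
A_\eps := (K_\eps^{(0)})^{1/2} K_\eps (K_\eps^{(0)})^{1/2}
$$
on $L^2(\R_+,\mathcal K)$, with the same non-zero spectrum; in particular $\tr(\wt\Pi^{(1)}_\eps)^q = \tr A_\eps^q$. One then compares $A_\eps$ with its model
$$
B_\eps := (M_\eps^{(0)})^{1/2} M_\eps (M_\eps^{(0)})^{1/2} = \Gamma_\eps^2 \otimes \bigl(F_0'(0)^{1/2}F'(0)F_0'(0)^{1/2}\bigr),
$$
where $M_\eps := \Gamma_\eps\otimes F'(0)$ and $M_\eps^{(0)} := \Gamma_\eps\otimes F_0'(0)$. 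Expanding $A_\eps - B_\eps$ as a three-term telescoping sum in the factors $(K_\eps^{(0)})^{1/2}$, $K_\eps$, $(K_\eps^{(0)})^{1/2}$, each term can be estimated in $\mathbf S_q$ via H\"older's inequality, invoking Lemma~\ref{lem7} (which gives $\|K_\eps - M_\eps\|_p = \|K_\eps^{(0)} - M_\eps^{(0)}\|_p = O(1)$), the Powers--Stormer-type inequality $\|X^{1/2}-Y^{1/2}\|_{2q}^2 \leq \|X-Y\|_q$ for positive $X,Y$, and the uniform bound $\|K_\eps\|_r, \|K_\eps^{(0)}\|_r = O(|\ln\eps|^{1/r})$ for $r\geq p$, which follows from $\tr K_\eps^r \leq \|K_\eps\|^{r-p}\,\tr K_\eps^p$ and Lemma~\ref{lem6}. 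A careful choice of H\"older exponents then yields $\|A_\eps - B_\eps\|_q = O(|\ln\eps|^{1/(2q)})$, whence $\|A_\eps - B_\eps\|_q^q = O(|\ln\eps|^{1/2}) = o(|\ln\eps|)$, and Lemma~\ref{lem8a} gives $\tr A_\eps^q = \tr B_\eps^q + o(|\ln\eps|)$.

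Finally, the tensor-product structure of $B_\eps$ factors the trace as
$$
\tr B_\eps^q = \tr\Gamma_\eps^{2q}\cdot\tr\bigl(F_0'(0)^{1/2}F'(0)F_0'(0)^{1/2}\bigr)^q.
$$
Lemma~\ref{lem2} evaluates the first factor as $|\ln\eps|\cdot \tfrac{1}{2\pi}\int_\R \pi^{2q}/\cosh^{2q}(\pi x)\,dx + o(|\ln\eps|)$. By Lemma~\ref{lem9} together with the $\approx$ relation, the non-zero eigenvalues of $F_0'(0)^{1/2}F'(0)F_0'(0)^{1/2}$ are precisely $\{a_\ell^2/\pi^2\}$, so the second factor equals $\pi^{-2q}\sum_\ell a_\ell^{2q}$; the two factors of $\pi^{\pm 2q}$ cancel to produce exactly \eqref{eq:30}. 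The main technical obstacle is the Schatten control in the telescoping argument: the only uniform estimate on $K_\eps - M_\eps$ is in $\mathbf S_p$, while the comparison of square roots lives naturally in $\mathbf S_{2q}$; the embedding $\mathbf S_p \subset \mathbf S_q$ combined with Powers--Stormer is what makes the exponents balance correctly for all $q\geq p$.
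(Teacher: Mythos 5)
Your proof is correct but follows a genuinely different route than the paper's. The paper also starts from $\wt\Pi^{(1)}_\eps$ via Lemma~\ref{lem4}, but instead of applying both $\approx$ reductions at once to land on $(K_\eps^{(0)})^{1/2}K_\eps(K_\eps^{(0)})^{1/2}$, it interleaves them: it passes through the chain $\wt\Pi^{(1)}_\eps \to M_{1,\eps} = \mathcal Z_\eps^{(0)}(\Gamma_\eps\otimes F'(0))(\mathcal Z_\eps^{(0)})^* \approx M_{2,\eps} = (\Gamma_\eps\otimes F'(0))^{1/2}(\mathcal Z_\eps^{(0)})^*\mathcal Z_\eps^{(0)}(\Gamma_\eps\otimes F'(0))^{1/2} \to \pi^{-2}\Gamma_\eps^2\otimes A$, arranging matters so that in each of the two $\|\cdot\|_p$-comparisons only the \emph{middle} factor of the triple product changes, while the outer factors ($\mathcal Z_\eps^{(0)}$ or $(\Gamma_\eps\otimes F'(0))^{1/2}$) are uniformly operator-norm bounded. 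That makes every comparison a direct $O(1)$ estimate in $\mathbf S_p$ straight from Lemma~\ref{lem7}, with no telescoping and no square roots of the Hankel operators $K_\eps$, $K_\eps^{(0)}$ appearing at all. Your version telescopes $A_\eps - B_\eps$ in three terms, two of which involve $(K_\eps^{(0)})^{1/2} - (M_\eps^{(0)})^{1/2}$ and hence require Powers--Stormer; the subtlety you correctly flag is that one must apply Powers--Stormer at exponent $q$ (not $p$) --- giving $\|(K_\eps^{(0)})^{1/2}-(M_\eps^{(0)})^{1/2}\|_{2q}^2\le\|K_\eps^{(0)}-M_\eps^{(0)}\|_q\le\|K_\eps^{(0)}-M_\eps^{(0)}\|_p = O(1)$ --- and then pair this in H\"older with $\|K_\eps\|_{2q}=O(|\ln\eps|^{1/(2q)})$ and the uniform operator bound on $(K_\eps^{(0)})^{1/2}$; this produces $\|A_\eps-B_\eps\|_q = O(|\ln\eps|^{1/(2q)})$, hence $\|A_\eps - B_\eps\|_q^q = O(|\ln\eps|^{1/2}) = o(|\ln\eps|)$, exactly as needed for Lemma~\ref{lem8a}. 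Both routes arrive at $\tr B_\eps^q = \tr\Gamma_\eps^{2q}\cdot\tr(F_0'(0)^{1/2}F'(0)F_0'(0)^{1/2})^q$ (your $B_\eps$ is $\pi^{-2}\Gamma_\eps^2\otimes A$ after the $\approx$ flip of $A$). The paper's chaining is softer and avoids Powers--Stormer entirely, at the cost of introducing the auxiliary $M_{1,\eps}$, $M_{2,\eps}$; your version is conceptually more direct (model the whole triple product at once) but pays for it with the Schatten bookkeeping you describe in your final paragraph.
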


\begin{proof}
Let us denote the operator on the right side of \eqref{eq:35} by $A$,  
\begin{equation}
\label{eq:27}
A = \pi^2 F'(0)^{1/2}  F_0'(0) F'(0)^{1/2} \quad \text{ in $\mathcal K$.}
\end{equation}
It follows from  Lemma~\ref{lem9} that $\{a_\ell^2\}_{\ell=1}^L$ are the non-zero eigenvalues of $A$. 
In the course of the proof we progressively reduce the problem for $\Pi_\eps^{(1)}$ to the 
problem for the following operators:
\begin{align}
\wt \Pi_\eps^{(1)}
&= 
\mathcal Z_\eps^{(0)} \left(\mathcal Z_\eps\right)^* \mathcal Z_\eps  \left(\mathcal Z_\eps^{(0)}\right)^*,
\label{eq:38}
\\
M_{1,\eps}
& =
\mathcal Z_\eps^{(0)} 
(\Gamma_\eps\otimes F'(0))
\bigl(\mathcal Z_\eps^{(0)}\bigr)^*,
\label{eq:39}
\\
M_{2,\eps}
& =
(\Gamma_\eps\otimes F'(0))^{1/2}
\bigl(\mathcal Z_\eps^{(0)}\bigr)^*
\mathcal Z_\eps^{(0)} 
(\Gamma_\eps\otimes F'(0))^{1/2},
\label{eq:40}
\\
\pi^{-2}\Gamma_\eps^2\otimes A
&=
(\Gamma_\eps\otimes F'(0))^{1/2}
(\Gamma_\eps\otimes F_0'(0))
(\Gamma_\eps\otimes F'(0))^{1/2}.
\label{eq:41}
\end{align}
Here formula \eqref{eq:38} follows from the factorization \eqref{eq:22}, 
formulas \eqref{eq:39} and \eqref{eq:40} are the definitions of the 
auxiliary operators $M_{1,\eps}$ and $M_{2,\eps}$, 
and formula \eqref{eq:41} follows from the definition \eqref{eq:27} of
$A$. 
It is convenient to start from the bottom operator \eqref{eq:41} and to move up. 

Denote the right side of \eqref{eq:30} by $\Delta_q$. 
By Lemma \ref{lem2}, we have
\begin{equation}
\label{eq:29}
\tr (\pi^{-2} \Gamma_\eps^2\otimes A)^q
= 
\sum_{\ell=1}^L a_\ell^{2q} \tr (\pi^{-2} \Gamma_\eps^2)^q
= 
|\ln\eps|\ \Delta_q
+o(\abs{\ln\eps}).
\end{equation}
Let us estimate the difference
\begin{equation}
M_{2,\eps}
-
\pi^{-2}\Gamma_\eps^2\otimes A
=
(\Gamma_\eps\otimes F'(0))^{1/2}
\bigl(
{(\mathcal Z_\eps^{(0)})}^*
\mathcal Z_\eps^{(0)} 
-\Gamma_\eps\otimes F_0'(0)
\bigr)
(\Gamma_\eps\otimes F'(0))^{1/2}.
\label{eq:42}
\end{equation}
By Lemma \ref{lem2} and Lemma \ref{lem7},
\begin{align}
\left\| M_{2,\eps} - \pi^{-2} \Gamma_\eps^2\otimes A \right\|_{q} 
& \leq
\left\| M_{2,\eps} - \pi^{-2} \Gamma_\eps^2\otimes A \right\|_{p} 
\notag \\
& \leq 
\left\| \left(\Gamma_\eps\otimes F'(0)\right)^{1/2} \right\|^2  \left\| \left( \mathcal Z_\eps^{(0)} \right)^* \mathcal Z_\eps^{(0)} - \Gamma_\eps\otimes F_0'(0) \right\|_p 
\notag \\
& = \|\Gamma_\eps\|^2 \|F'(0)\|  \left\| \left( \mathcal Z_\eps^{(0)} \right)^* \mathcal Z_\eps^{(0)} - \Gamma_\eps\otimes F_0'(0) \right\|_p 
= O(1) \,.
\label{eq:31a}
\end{align}
Let us apply Lemma~\ref{lem8a} with $X_\eps=\pi^{-2} \Gamma_\eps^2\otimes A$ and 
$Y_\eps=M_{2,\eps}$. 
In the hypothesis \eqref{eq:30b} of this lemma, 
the first estimate follows from \eqref{eq:29} and the second estimate holds by \eqref{eq:31a}. 
We obtain
\begin{equation}
\label{eq:31}
\lim_{\eps\to +0} |\ln\eps|^{-1} \tr \left( M_{2,\eps} \right)^q = \Delta_q\,.
\end{equation}
Next, by definitions \eqref{eq:39} and \eqref{eq:40}, we have 
$M_{2,\eps}\approx M_{1,\eps}$, and therefore \eqref{eq:31} yields
$$
\lim_{\eps\to 0+} |\ln\eps|^{-1} \tr \left( M_{1,\eps} \right)^q = \Delta_q\,.
$$
Further, similarly to \eqref{eq:42}, \eqref{eq:31a},
$$
\wt \Pi_\eps^{(1)}-M_{1,\eps}
=
\mathcal Z_\eps^{(0)}
(\mathcal Z_\eps^*\mathcal Z_\eps-\Gamma_\eps\otimes F'(0))
{(\mathcal Z_\eps^{(0)})}^*,
$$
and so by Lemmas~\ref{lem6} and \ref{lem7}
$$
\norm{\wt \Pi_\eps^{(1)}-M_{1,\eps}}_q
\leq
\norm{\wt \Pi_\eps^{(1)}-M_{1,\eps}}_p
\\
\leq
\norm{\mathcal Z_\eps^{(0)}}^2
\norm{\mathcal Z_\eps^*\mathcal Z_\eps-\Gamma_\eps\otimes F'(0)}_p
=
O(1) \,.
$$
Now we can apply Lemma~\ref{lem8a} with $X_\eps=M_{1,\eps}$ and 
$Y_\eps=\wt \Pi_\eps^{(1)}$, which yields
$$
\lim_{\eps\to 0+} |\ln\eps|^{-1} \tr ( \wt \Pi^{(1)}_\eps )^q
=\Delta_q\,.
$$
Finally, we apply Lemma~\ref{lem8a} once again 
with $X_\eps={\wt \Pi}^{(1)}_\eps$ and $Y_\eps=\Pi^{(1)}_\eps$. 
The second estimate in the hypothesis \eqref{eq:30b} is given by Lemma~\ref{lem4}.
This yields 
$$
\lim_{\eps\to 0+} |\ln\eps|^{-1} \tr \left( \Pi^{(1)}_\eps \right)^q = \Delta_q\,,
$$
as required.
\end{proof}

\subsection{Proofs of Theorem~\ref{thm1} and Corollary \ref{cor2}}

\begin{proof}[Proof of Theorem \ref{thm1}]
First consider the operator $\Pi^{(1)}_\eps$. 
By Lemma \ref{lem8}, the asymptotics \eqref{eq:6} holds if $f$ is $t^q$ times a polynomial.
The proof for a general $f$ follows from the Weierstrass approximation theorem as at the end of the proof of Lemma \ref{lem2}.

Next, consider the operator $\Pi^{(2)}_\eps$. 
By our assumptions on the functions $\psi_\eps^{\pm}$, we have
\begin{align*}
\Pi_\eps^{(2)}
& =
\psi_\eps^{-}(H_0)
\psi_\eps^{+}(H)
\psi_\eps^{-}(H_0)
\leq
\psi_\eps^{-}(H_0)
\1_{(\eps,\infty)}(H)
\psi_\eps^{-}(H_0)
\\
& \approx
\1_{(\eps,\infty)}(H)
(\psi_\eps^{-}(H_0))^2
\1_{(\eps,\infty)}(H)
\leq
\1_{(\eps,\infty)}(H)
\1_{(-\infty,-\eps)}(H_0)
\1_{(\eps,\infty)}(H)
\approx
\Pi_\eps^{(1)}\,.
\end{align*}
By the min-max principle, it follows that 
\begin{equation}
\tr(\Pi_\eps^{(2)})^q\leq\tr(\Pi_\eps^{(1)})^q\,, \quad q\geq p\,.
\label{eq:33}
\end{equation}
Similarly, we have a lower bound 
\begin{multline*}
\Pi_\eps^{(2)}
=
\psi_\eps^{-}(H_0)
\psi_\eps^{+}(H)
\psi_\eps^{-}(H_0)
\geq
\psi_\eps^{-}(H_0)
\1_{(2\eps,\infty)}(H)
\psi_\eps^{-}(H_0)
\\ 
\approx
\1_{(2\eps,\infty)}(H)
(\psi_\eps^{-}(H_0))^2
\1_{(2\eps,\infty)}(H)
\leq
\1_{(2\eps,\infty)}(H)
\1_{(-\infty,-2\eps)}(H_0)
\1_{(2\eps,\infty)}(H)
\approx
\Pi_{2\eps}^{(1)}\,, 
\end{multline*}
and therefore 
\begin{equation}
\tr(\Pi_\eps^{(2)})^q\geq\tr(\Pi_{2\eps}^{(1)})^q\,, \quad q\geq p\,.
\label{eq:34}
\end{equation}
A combination \eqref{eq:33} and \eqref{eq:34} gives the analogue of Lemma~\ref{lem8} for 
$\Pi_\eps^{(2)}$ and so again we obtain the required statement by application of the
Weierstrass approximation theorem. 
\end{proof}

\begin{proof}[Proof of Corollary \ref{cor2}]
Let $f(x)$ be a continuous function on $[0,1]$ such that $f$ vanishes
in a neighbourhood of zero and $\ln(1-x)\leq f(x)$. 
We have
$$
\ln\det\left(I-\Pi_\eps^{(j)}\right) = \tr \ln (I-\Pi_\eps^{(j)}) \leq \tr f(\Pi_\eps^{(j)}) \,,
$$
and therefore by Theorem \ref{thm1},
$$
\limsup_{\eps\to 0+} |\ln\eps|^{-1} \ln\det\left( I-\Pi_\eps^{(j)}\right) 
\leq 
\frac{1}{2\pi} \sum_{\ell=1}^L \int_{-\infty}^\infty f \left( \frac{a_\ell^2}{\cosh^2(\pi x)} \right) dx \,.
$$
Taking the infimum over all such $f$ in the right side, we obtain the required statement. 
\end{proof}


\subsection*{Acknowledgments} Financial support from the U.S.~National Science Foundation through grants PHY-1347399 and DMS-1363432 (R.F.) is acknowledged. The authors are grateful to the anonymous referee for helpful suggestions.


\bibliographystyle{amsalpha}

\end{document}